\author{Mikhail Ganzhinov}
\address{Department of Communications and Networking, Aalto University School of Electrical Engineering, P.O.\ Box 15400, 00076 Aalto, Finland}
\newtheorem{proposition}{Proposition}
\newtheorem{definition}{Definition}
\newtheorem{lemma}{Lemma}
\theoremstyle{remark}
\newtheorem*{remark}{Remark}
\newcommand{\U}{\textrm{U}}
\newcommand{\OO}{\textrm{O}}
\newcommand{\GL}{\textrm{GL}}
\newcommand{\PGL}{\textrm{PGL}}
\newcommand{\Sym}{\textrm{Sym}}
\newcommand{\Tr}{\textrm{Tr}}
\newcommand{\Dim}{\textrm{dim}}
\title{Highly symmetric lines}
\begin{document}
\maketitle
\begin{abstract}
A generalization of highly symmetric frames is presented by considering also projective stabilizers of frame vectors. This allows construction of highly symmetric line systems and study of highly symmetric frames in a more unified manner. Construction of highly symmetric line systems involves computation of twisted spherical functions associated with finite groups. Further generalizations include definition of highly symmetric systems of subspaces. We give several examples which illustrate our approach including 3 new kissing configurations which improve lower bounds on the kissing number in $d=10,11,14$ to 510, 592 and 1932 respectively.
\end{abstract}

\section{Introduction}

Finding configurations of subspaces with desirable properties is important in many applications such as communication, coding, and quantum information theory \cite{PWTH,SS,SH}. Desirable properties include optimality in various metrics. Configurations with high degree of symmetry are often among the best candidates for optimality \cite{CKMP,CHS,JKM}.

In this paper, by generalizing definition of highly symmetric frames presented in \cite{BW}, we define highly symmetric systems of subspaces. These systems of subspaces are subspace-transitive and determined by the stabilizer subgroups of their subspaces. By focusing mainly on the case of line systems it will be shown that, like in the case of highly symmetric frames, highly symmetric line systems are always defined by irreducible representations of finite groups. Highly symmetric line systems tend to have a smaller number of distinct angles between lines in comparison to line systems corresponding to generic orbits of the same group. This increases the likelyhood of finding a line system with good properties among highly symmetric line systems. There are finitely many highly symmetric line systems associated with any irreducible representation of a finite group.

The paper is organized as follows. In Section 2 we give definitions and basic properties of spherical codes, frames, and systems of subspaces. In Section 3 after defining symmetry groups of frames and systems of subspaces we define highly symmetric systems of subspaces. We prove that highly symmetric line systems (i.e., highly symmetric systems of 1-dimensional subspaces) are always defined by representations of finite groups. We also show that highly symmetric line systems are always related to a certain generalization of highly symmetric frames. In Section 4 we outline construction of generalized highly symmetric frames and other related objects. We illustrate the theory with examples in Section 5. 

\section{Preliminaries}

We begin by recalling definitions and basic properties of spherical codes, frames and systems of subspaces including line systems. See \cite{C,CHS,W} for more details regarding frame theory and subspace packings. Throughout the text we assume that $S$ is a finite set and $\mathcal{H}$ is a $d$-dimensional Hilbert space ($d<\infty$) which is often complex but can also be real.

A \emph{spherical code} is a finite sequence $\mathcal{C}=(v_j)_{j\in S}$ of unit vectors in the Hilbert space $\mathcal{H}$. The set of inner products 
\begin{displaymath}
A(\mathcal{C})=\{\langle v_i,v_j\rangle:i,j\in S, i\neq j\} 
\end{displaymath}
is the \emph{angle set} of this code. When $\mathcal{H}$ is real, the angle set contains cosines of the angles between distinct vectors of the code $\mathcal{C}$, hence the name.

A finite sequence of $k$-dimensional subspaces $\mathcal{S}=(V_j)_{j\in S}$ of the Hilbert space $\mathcal{H}$ is called a \emph{system of subspaces}. When $k=1$ we denote subspaces $V_j$ (which are now lines) as $l_j$ and the corresponding  \emph{line system} as $\mathcal{L}$. Each line $l_j$ in a line system is spanned by a single unit vector $\phi_j$. The angle set of a line system $\mathcal{L}$ can be defined similarly to the case of spherical codes:
\begin{displaymath}
A(\mathcal{L})=\{|\langle \phi_i,\phi_j\rangle|: i,j\in S,i\neq j, \phi_i\in l_i, \phi_j\in l_j, \rVert\phi_i\lVert_2=\rVert\phi_j\lVert_2=1\}.
\end{displaymath}
The previous definition does not depend on the choice of unit vectors representing lines.

A finite sequence of vectors $\Phi=(v_j)_{j\in S}$ in the Hilbert space $\mathcal{H}$ is called a \emph{frame} if it spans $\mathcal{H}$. This is equivalent to existence of such constants $A,B>0$ that inequailities
\begin{displaymath}
A\rVert v\lVert_2^2\leq\sum_{j\in S}|\langle v,v_j\rangle|^2\leq \rVert v\lVert_2^2
\end{displaymath}
hold for any $v\in\mathcal{H}$. If one can choose $A=B$ we call $\Phi$ a \emph{tight} frame. This is equivalent to the following generalization of orthogonal expansion
\begin{displaymath}
v=\frac{1}{A}\sum_{j\in S}\langle v,v_j\rangle v_j \textrm{ for all } v\in\mathcal{H}
\end{displaymath}
which is often a desired property for a frame. If all vectors of $\Phi$ have the same norm, $\Phi$ is an \emph{equal norm frame} and if all vectors are of unit norm, $\Phi$ is a \emph{unit norm frame}. Unit norm frame can be interpreted as a spherical code. The set of frame angles between vectors of a unit norm frame $\Phi$ is defined as
\begin{displaymath}
A(\Phi)=\{|\langle v,w\rangle|:v,w\in\Phi,v\neq w\}.
\end{displaymath}
In fact, $A(\Phi)=A(\mathcal{L})$ whenever unit vectors $(\phi_j)_{j\in S}$ span lines $(l_j)_{j\in S}$ of a line system $\mathcal{L}$. Vectors of a frame $\Phi$ can be used to construct a \emph{Gram matrix} 
\begin{equation}\label{eq:0}
G_\Phi=[\langle\phi_j,\phi_i\rangle]_{i,j\in S}
\end{equation}
associated with it. This matrix is positive semidefinite and it is possible to recover $\Phi$ from its Gram matrix, up to a unitary equivalence. Any positive semidefinite matrix is a Gram matrix of some frame. The Gram matrix of a tight frame is a projection modulo scalar factor. 
\newline

In all three cases ($\mathcal{C},\mathcal{L},\Phi$) an important problem is to find such packings in $\mathcal{H}$ that vectors or lines are spread far from each other. In other words, the quantity
\begin{displaymath}
\mu=\underset{\alpha\in\textrm{A}(\mathcal{X})}{\textrm{max}}\alpha,
\end{displaymath}
where $\mathcal{X}$ denotes $\mathcal{C}$, $\mathcal{L}$ or $\Phi$, is required to be close to the smallest possible value among all packings of the same size and type. Here, $\mu$ is the smallest angular distance between vectors or lines in $\mathcal{X}$. If $\mathcal{X}=\mathcal{L}\textrm{ or }\Phi$ then $\mu$ is called a \emph{coherence} of a line system or a unit norm frame.

\section{Symmetries of frames and systems of lines}
In this section we consider a situation where a frame or a line system remains invariant under linear or projective linear transformations. We will pay special attention to the situation where a stabilizer subgroup of a line distinguishes it among all other lines. For more information on the representation theory and symmetries of frames see \cite{CR,W} respectively. Note, that we define symmetry group of a frame as a group of linear transformations preserving it, slightly differently as in \cite{W}.

\subsection{Symmetries of frames} 

Two frames $\Phi,\Phi'$ in Hilbert spaces $\mathcal{H},\mathcal{H}'$ respectively are said to be \emph{linearly equivalent} if there exist a linear isomorphism $M$ between $\mathcal{H}$ and $\mathcal{H}'$ such that the sequence of vectors $(M\phi_j)_{j\in S}$ can be reordered to $\Phi'$. Each frame $\Phi$ is linearly equivalent to a tight frame with $A=B=|\Phi|/d$. Up to a unitary (orthogonal if $\mathcal{H}$ is real) factor this equivalence is unique and $M$ can be taken to be
\begin{equation}\label{eq:1}
M=\frac{|\Phi|}{d}\cdot\tilde{M}^{-1/2} \textrm{\hspace{0.7cm} where  \hspace{0.7cm}} \tilde{M}v=\sum_{j\in S}\langle v,\phi_j \rangle \phi_j \textrm{ \hspace{0.0cm}for all }v\in\mathcal{H}.
\end{equation}
The set of all maps $M\in\GL(\mathcal{H})$ which permute vectors of $\Phi$ is called the \emph{symmetry group} of $\Phi$ and is denoted by $\Sym(\Phi)$.
It is simple to verify that $\Sym(\Phi)$ is indeed a group. Each $M\in\Sym(\Phi)$ is uniquely determined by the action on a basis taken from $\Phi$, as a consequence, group $\Sym(\Phi)$ is finite.
If $\Phi$ is a tight frame, then $\Sym(\Phi)$ contains only unitaries.

On practice, instead of the full symmetry group of a frame $\Phi$, a subgroup $G\leq\Sym(\Phi)$ is used. If $v=hv$ for some $v\in\Phi$ and $h\in G$, we say that $h$ \emph{stabilizes} $v$. The set of all elements of $G$ stabilizing $v$ form a \emph{stabilizer subgroup} $H_v$ of a vector $v$. Similarly, if  $c_hv=hv$ where $c_h\in\mathbb{C}$, we say that $h$ stabilizes a line $l$ spanned by $v$. The set of all  elements of $G$ stabilizing  $l$ form a stabilizer subgroup $H_l$ of a line $l$. A frame $\Phi$ is \emph{$G$-transitive} if for any vectors $v_i,v_j\in\Phi$ exists $g\in G$ such that $v_j=gv_i$.  If $\Phi$ is $G$-transitive, the stabilizer subgroup of any other frame vector $gv$ is $H_{gv}=gH_v g^{-1}$.

Let $G\leq\GL(\mathcal{H})$ be a finite group, a frame of the form $(gv)_{g\in G}$ where $v\in\mathcal{H}$ is called a \emph{group frame} or more specifically a \emph{$G$-frame}. In other words, $G$-frame is a frame that is $G$-orbit of a single vector. For any $G$-frame $\Phi$ we have $|\Phi|=|G|$, $G\leq\Sym(\Phi)$ and $\Phi$ is $G$-transitive. Group frame $\Phi$ contains $|H_v|$ copies of each vector $u\in\Phi$ and there are in total $|G|/|H_v|$ distinct vectors inside $\Phi$. Each distinct vector corresponds to a different  $G/H_v$-coset, i.e., is equal to $ghv$ where element $g\in G$ is a representative of a given coset and $h\in H_v$. In order to get rid of unnecessary copies of vectors inside $\Phi$ we can form a new frame $\Phi'=(gv)_{gH_v\in G/H_v}$ which is called \emph{homogeneous}. If $\Phi$ is a tight frame, so is $\Phi'$. By choosing $M$ as in \eqref{eq:1} we can always transform $\Phi$ into linearly equivalent unit norm group tight frame and $\Phi'$ into linearly equivalent homogeneous unit norm tight frame.

 Before generalizing highly symmetric frames, we need to define group representations, representation theory of finite groups will be covered in more details in the next section. A \emph{linear representation} of an abstract group $G$ on a Hilbert space $\mathcal{H}$ is a group homomorphism $\rho:G\rightarrow\GL(\mathcal{H})$. If additionally $\rho(G)\leq\U(\mathcal{H})$ we say that $\rho$ is a \emph{unitary representation}. By slightly abusing notation, the representation $\rho:G\rightarrow\GL(\mathcal{H})$ can be abbreviated by $\mathcal{H}$ or $G$ if the homomorphism $\rho$ is clear from the context.

Let $\rho_1$ and $\rho_2$ be two representations  of $G$ on $\mathcal{H}_1$ and $\mathcal{H}_2$ respectively. A linear map $M:\mathcal{H}_1\rightarrow\mathcal{H}_2$ commuting with $\rho_1$ and $\rho_2$, that is,
\begin{displaymath}
\rho_2(g)M=M\rho_1(g) \textrm{ for all } g\in G,
\end{displaymath}
is said to be $G$-linear. Representations $\rho_1$ and $\rho_2$ are said to be (linearly) equivalent, or more specifically $M$-equivalent, if there exist $G$-linear isomorphism $M:\mathcal{H}_1\rightarrow\mathcal{H}_2$.

A representation $\rho$ on $\mathcal{H}$ is said to be \emph{reducible} if there exist a complex, nontrivial $\rho(G)$-invariant subspace $\mathcal{H}'\subset\mathcal{H}$. Restriction of $\rho$ on $\mathcal{H}'$ yields a \emph{subrepresentation} of $\rho$ which is denoted by $\rho|_{\mathcal{H}'}$. The representation $\rho$ is called \emph{irreducible} if no $\rho(G)$-invariant subspaces exist. In this case for every nonzero $v\in\mathcal{H}$, an orbit $\Phi=(\rho(g)v)_{g\in G}$ is a frame in $\mathcal{H}$, if additionally $\rho$ is an unitary representation, then $\Phi$ is a tight frame.

\subsection{Symmetries of systems of subspaces}

Every $k$-dimensional subspace of $\mathcal{H}$ can be thought as a projective $(k-1)$-dimensional subspace of the projective space $\textrm{\textbf{P}}(\mathcal{H})$. All subspaces stay invariant under scalar transformations forming a center of $\GL(\mathcal{H})$. Thus, it is more natural to define a symmetry group of a system of subspaces as a subgroup of 
\begin{displaymath}
\PGL(\mathcal{H})=\GL(\mathcal{H})/Z(\GL(\mathcal{H})).
\end{displaymath} 
In the remainder of this section, we assume that all systems of subspaces span entire $\mathcal{H}$.

Two systems of subspaces $\mathcal{S}=(V_j)_{j\in S}$ and $\mathcal{S}'$ in $\mathcal{H}$ and $\mathcal{H}'$ respectively, are said to be \emph{projectively equivalent} if there exist a projective isomorphism $M$ between $\textrm{\textbf{P}}(\mathcal{H})$ and $\textrm{\textbf{P}}(\mathcal{H}')$ such that the sequence of subspaces $M\mathcal{S}=(MV_j)_{j\in S}$ can be reordered to $\mathcal{S}'$. The set of all maps $M\in\PGL(\mathcal{H})$ which permutes subspaces of $\mathcal{S}$ is called the symmetry group of $\mathcal{S}$ and is denoted by $\Sym(\mathcal{S})$. Symmetry groups of systems of subspaces are not always finite.
The \emph{stabilizer subgroup} of $V\in\mathcal{S}$ and \emph{$G$-transitivity} of $\mathcal{S}$ can be defined similarly as for frames.

Let $G\leq\PGL(\mathcal{H})$ be a finite group, we call any system of subspaces of the form $(gV)_{g\in G}$ where $V\subset\textbf{\textrm{P}}(\mathcal{H})$ is a subspace, an \emph{orbit of subspaces}. For any orbit of subspaces $\mathcal{S}$ we have $|\mathcal{S}|=|G|$, $G\leq\Sym(\mathcal{S})$ and $\mathcal{S}$ is $G$-transitive. If $V=hV$ for $V\in\mathcal{S}$ and $h\in G$, we say that $h$ \emph{stabilizes} $V$. The set of all elements $h\in G$ stabilizing $V$ form a \emph{stabilizer subgroup} $H_V$ of a subspace $V$ within $G$. Orbit of subspaces $\mathcal{S}$ contains $|H_V|$ copies of each subspace and there are in total $|G|/|H_V|$ distinct subspaces inside $\mathcal{S}$. In order to get rid of copies of subspaces inside $\mathcal{S}$ we can form a \emph{homogeneous} system of subspaces $\mathcal{S}'=(gV)_{gH_V\in G/H_V}$.

 A \emph{projective representation} of an abstract group $G$ on a Hilbert space $\mathcal{H}$ is a group homomorphism $\rho:G\rightarrow\PGL(\mathcal{H})$. A projective representation $\rho$ on $\mathcal{H}$ is said to be \emph{reducible} if there exist a nontrivial $\rho(G)$-invariant projective subspace $\mathcal{H}'\subset\textbf{\textrm{P}}(\mathcal{H})$. Otherwise projective representation is \emph{irreducible}. Projective representations $\rho_1$ and $\rho_2$ on Hilbert spaces $\mathcal{H}_1$ and $\mathcal{H}_2$ respectively are said to be projectively equivalent, if there exist a projective isomorphism $M:\textrm{\textbf{P}}(\mathcal{H}_1)\rightarrow\textrm{\textbf{P}}(\mathcal{H}_2)$ such that $M\rho_1(g)l=\rho_2(g)Ml$  for all $l\in\textrm{\textbf{P}}(\mathcal{H}_1)$ and $g\in G$.

The usefulnes of \emph{highly symmetric frames} defined in \cite{BW} stems from the observation that they often have small cardinalities and small angle sets. We will generalize this definition to systems of subspaces.

\begin{definition}[Highly symmetric systems of subspaces]\label{def1.1}
A system of distinct $k$-subspaces $\mathcal{S}$ ($k<d$) is highly symmetric if:
\begin{enumerate}
\item its symmetry group $\Sym(\mathcal{S})$ is irreducible and acts on subspaces of $\mathcal{S}$ transitively,
\item the stabilizer subgroup $H_V$ of any subspace $V\in\mathcal{S}$ is such that $H_V|_V$ is irreducible,
\item there exist a neighbourhood of $V\subset\textbf{P}(\mathcal{H})$ in which there are no other subrepresentations of $H_V$ projectively equivalent to $H_V|_V$.
\end{enumerate}
\end{definition}
\begin{remark} The last condition in Definition \ref{def1.1} is important. It isolates subspace $V\in\mathcal{S}$ from other $k$-subspaces of $\mathcal{H}$ stabilized by $H_V$. This ensures that $H_V$ can stabilize at most finitely many subspaces which belong to highly symmetric systems of subspaces.
\end{remark}
Highly symmetric line systems generalize line systems defined by the vectors of highly symmetric frames. Although a symmetry group of a highly symmetric line system $\mathcal{L}$ is not always finite, the next proposition guarantees that it is always possible to to pick finite $G\leq\Sym(\mathcal{L})$ which can be used to determine $\mathcal{L}$.

\begin{proposition}\label{prop:x}
For every  highly symmetric line system $\mathcal{L}$ there exist a finite irreducible $G\leq\Sym(\mathcal{L})$ with following properties:
\begin{enumerate}
\item $G$ acts on $\mathcal{L}$ transitively,
\item the stabilizer of any line $l\in\mathcal{L}$ within $G$ don't stabilize linewise any subspace that properly contains $l$.
\end{enumerate}
\end{proposition}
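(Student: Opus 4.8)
The plan is to show that $\Gamma:=\Sym(\mathcal{L})$, although possibly infinite, is necessarily a linear algebraic group which is a finite extension of a torus, and then to extract the required finite subgroup. First I record the structure of $\Gamma$. For each permutation $\sigma$ of $S$ the locus $\{M\in\PGL(\mathcal{H}):Ml_j=l_{\sigma(j)}\text{ for all }j\}$ is Zariski closed, so $\Gamma$ is a finite union of such cosets and hence a linear algebraic group. Its identity component lies in the kernel $\mathcal{K}$ of the action homomorphism $\Gamma\to\Sym(S)$, and $\mathcal{K}$ is exactly the pointwise stabilizer of $\mathcal{L}$ in $\PGL(\mathcal{H})$. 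A lift $D$ of $\mathcal{K}$ to $\GL(\mathcal{H})$ is abelian (its elements agree on the spanning set of the $\phi_j$) and simultaneously diagonalizable; writing $\mathcal{H}=\bigoplus_{i=1}^{p}E_i$ for its common eigenspaces, one checks that $D$ is the full group of scalars on each $E_i$, so $\mathcal{K}=\Gamma^\circ$ is a torus, each $l_j$ lies in one $E_i$, and $Q:=\Gamma/\mathcal{K}$ embeds into $\Sym(S)$, is finite, and is transitive on $\mathcal{L}$. Finally, since $g^{|Q|}\in\mathcal{K}$ is semisimple for every $g\in\Gamma$ and a finite-order unipotent is trivial, every element of $\Gamma$ is semisimple. (Over $\mathbb{R}$ one works with $\mathbb{R}$-points of real algebraic groups throughout.)

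Next I rephrase the hypotheses and the goal. For lines, condition (2) of Definition \ref{def1.1} is automatic and $\PGL$ of a line is trivial, so condition (3) says precisely that the line $l$ is an isolated point of the set $\textrm{Fix}(H_l)\subseteq\mathbf{P}(\mathcal{H})$ of lines fixed by its stabilizer $H_l$ in $\Gamma$. Since all elements of $\Gamma$ are semisimple, for any subgroup $H\le\Gamma$ the set $\textrm{Fix}(H)=\bigcap_{h\in H}\textrm{Fix}(h)$ is a finite union of linear projective subspaces (each $\textrm{Fix}(h)$ is the union of the projectivized eigenspaces of $h$, and $\mathbf{P}(\mathcal{H})$ is Noetherian); hence "$l$ is isolated in $\textrm{Fix}(H)$" is equivalent to "$H$ stabilizes linewise no subspace properly containing $l$", which is exactly the condition required in part (2) of the Proposition. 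So it suffices to build a finite irreducible $G\le\Gamma$, transitive on $\mathcal{L}$, such that $l$ is isolated in $\textrm{Fix}(G\cap H_l)$ for $l\in\mathcal{L}$.

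To build $G$: first pick a finite subgroup $\mathcal{K}_0\le\mathcal{K}$ — a torsion subgroup $\mathcal{K}[n]$, or even just its $2$-torsion — that still separates the distinct weight characters $\chi_1,\dots,\chi_p$ of $\mathcal{K}$, which is possible because each $\chi_i-\chi_j$ is a primitive vector in the relevant character lattice; then $\mathcal{K}_0$ is characteristic in $\mathcal{K}$ with the same common eigenspaces $E_i$. Second, produce a finite subgroup $\hat Q\le\Gamma$ surjecting onto $Q$: choosing lifts $\tilde q$ of $q\in Q$, the $2$-cocycle $c(q_1,q_2)=\tilde q_1\tilde q_2(\widetilde{q_1q_2})^{-1}\in\mathcal{K}$ becomes a coboundary after taking $|Q|$-th powers (as $|Q|\cdot H^2(Q,\mathcal{K})=0$), and divisibility of the torus $\mathcal{K}$ lets one absorb a $|Q|$-th root of that coboundary into the lifts, making the new cocycle land in the finite characteristic subgroup $\mathcal{K}[|Q|]$; then $\hat Q\cap\mathcal{K}$ is finite, so $\hat Q$ is finite. (Over $\mathbb{R}$ one first splits off the torsion-free divisible part of $\mathcal{K}$, which is cohomologically trivial for $Q$.) Set $G:=\langle\hat Q,\mathcal{K}_0\rangle$. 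Then $G\cap\mathcal{K}$ lies in the finite group generated by $\mathcal{K}_0$ and $\mathcal{K}[|Q|]$, so $G$ is finite; $G$ surjects onto $Q$, so it is transitive on $\mathcal{L}$; and $G$ is irreducible because any $G$-invariant subspace is $\mathcal{K}_0$-invariant, hence $\mathcal{K}$-invariant (same eigenspaces), hence — being also $\hat Q$-invariant and $\Gamma=\mathcal{K}\hat Q$ — invariant under $\Gamma$. Finally, $G\cap H_l=(G\cap\mathcal{K})\cdot\hat Q_l$, where $\hat Q_l$ is the preimage in $\hat Q$ of the stabilizer of $l$ in $Q$; this group preserves the eigenspace $E_{i_0}$ containing $l$ and, since $G\cap\mathcal{K}$ acts there by scalars, induces on $E_{i_0}$ the same subgroup of $\PGL(E_{i_0})$ as $H_l$ does. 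As $\textrm{Fix}(G\cap H_l)\subseteq\textrm{Fix}(\mathcal{K}_0)=\bigsqcup_i\mathbf{P}(E_i)$ and the pieces $\mathbf{P}(E_i)$ with $i\ne i_0$ are at positive distance from $l$, the line $l$ is isolated in $\textrm{Fix}(G\cap H_l)$ precisely because it is isolated in $\textrm{Fix}(H_l)$ inside $\mathbf{P}(E_{i_0})$.

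The step I expect to be the main obstacle is producing the finite lift $\hat Q$: the extension $1\to\mathcal{K}\to\Gamma\to Q\to1$ need not split, so one cannot lift $Q$ naively, and one must exploit the divisibility of the torus $\mathcal{K}$ (equivalently, vanishing of the relevant higher cohomology) to replace an arbitrary system of lifts by a finite subgroup — all while keeping enough of $\mathcal{K}$ inside $G$ to preserve both irreducibility and the isolation property of the line-stabilizers.
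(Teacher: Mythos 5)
Your argument is correct and follows the same overall strategy as the paper's proof: both isolate the kernel $\mathcal{K}$ (the paper's $H$) of the action of $\Sym(\mathcal{L})$ on the lines, observe that the quotient $Q=\Sym(\mathcal{L})/\mathcal{K}$ is finite, decompose $\mathcal{H}$ into the common eigenspaces $E_i$ of $\mathcal{K}$ (the paper's $\mathcal{H}_i$), and then assemble $G$ from a finite set of coset representatives together with a finite torsion piece of $\mathcal{K}$ that still separates the blocks (your $\mathcal{K}_0=\mathcal{K}[2]$ is exactly the paper's group $G_2$ of $\pm1$ block scalars, and the irreducibility argument -- invariant subspaces of the torsion piece are already $\mathcal{K}$-invariant -- is identical). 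Where you genuinely diverge is in producing the finite lift of $Q$: the paper does this explicitly, normalizing each coset representative so that the block matrices $M_{i,j}$ have determinant $1$, which pins the scalar ambiguities down to $(d/p)$-th roots of unity and yields a group $G_1$ of order $(d/p)^{p-1}|Q|$; you instead run the general cohomological argument that $|Q|\cdot H^2(Q,\mathcal{K})=0$ together with divisibility of the torus forces the extension cocycle into $\mathcal{K}[|Q|]$. The two devices accomplish the same thing; the paper's is elementary and gives an explicit order for $G$, while yours is more robust (it does not depend on the blocks having equal dimension or on a determinant normalization being available, and it cleanly handles the real case by splitting off the uniquely divisible part). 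You are also considerably more careful than the paper on two points it treats briefly: the translation of condition (3) of Definition \ref{def1.1} into the statement that $l$ is isolated in $\textrm{Fix}(H_l)$ (via semisimplicity of all elements of $\Sym(\mathcal{L})$, which the paper never addresses), and the verification that $G\cap H_l$ induces the same subgroup of $\PGL(E_{i_0})$ as $H_l$, which is the substance behind the paper's remark that a ``similar argument'' yields property (2) of the Proposition.
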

\begin{proof}
Take a subgroup $H\leq\Sym(\mathcal{L})$ containing all elements which fix all lines of $\mathcal{L}$ simultaneously. Each coset of $\Sym(\mathcal{L})/H$ is completely characterized by how it permutes lines within $\mathcal{L}$. Since lines of $\mathcal{L}$ can be permuted in finitely many ways, $\Sym(\mathcal{L})/H$ is finite. Moreover, $\Sym(\mathcal{L})/H$ has a group structure modulo $H$ as $H$ is a normal subgroup of $\Sym(\mathcal{L})$.

Let $d=\dim(\mathcal{H})$. Since $\mathcal{L}$ spans $\mathcal{H}$, pick lines $\{l_1,\ldots,l_d\}\subset\mathcal{L}$ spanning entire $\mathcal{H}$. By changing coordinates we can assume without loss of generality that these lines are orthogonal. Let $V=\{v_l:l\in\mathcal{L}\}$ be the set of unit vectors representing each line. Each vector $v_l$ has a unique orthonormal decomposition in terms of vectors $V_d=\{v_{l_1},\ldots,v_{l_d}\}$.

For any transformation $h\in H$ let $h'\in\GL(\mathcal{H})$ be one of its representatives. For any $v\in V\backslash V_d$ we have $h'v=\lambda_{h',v} v$. Now, since $v$ has a unique decomposition
\begin{displaymath}
v=\sum_{k=1}^da_kv_{l_k} \textrm{ where } a_k\in\mathbb{C} \textrm{ for } k=1,\ldots,d,
\end{displaymath} 
we notice that $h'v_{l_k}=\lambda_{h',v}v_{l_k}$ for all basis vectors $v_{l_k}$ such that $a_k\neq0$. Vectors $v_{l_k}$ with $a_k\neq0$ define a subspace $\mathcal{H}_v$ on which all transformations $h'$ act as scalars. By combining suitable subspaces $\mathcal{H}_v$ we obtain (unique) maximal subspaces $\mathcal{H}_i$ with orthonormal bases $\mathcal{B}_i\subset V_d$, $i=1,\ldots,p$, on which all transformations $h'$ act as scalars. Additionally we have $\mathcal{H}_i\bot\mathcal{H}_j$ when $i\neq j$, $\mathcal{H}_1\oplus,\ldots,\oplus\mathcal{H}_p=\mathcal{H}$, $\dim(\mathcal{H}_i)=d/p$ for all $i\in\{1,\ldots,p\}$ and every $v\in V$ belongs to exactly one subspace $\mathcal{H}_i$. For every two subspaces $\mathcal{H}_i,\mathcal{H}_j$, $i\neq j$ there exist at least one transform $h'\in\GL(\mathcal{H})$ representing some $h\in H$ such that
\begin{displaymath}
h'v_{\mathcal{H}_i}=\lambda_iv_{\mathcal{H}_i}\textrm{, }h'v_{\mathcal{H}_j}=\lambda_jv_{\mathcal{H}_j}\textrm{ for every }v_{\mathcal{H}_i}\in\mathcal{H}_i,v_{\mathcal{H}_j}\in\mathcal{H}_j\textrm{ and }\lambda_i\neq\lambda_j.
\end{displaymath}
On the other hand, any transformation $h'\in\GL(\mathcal{H})$ that acts on every subspace $\mathcal{H}_i$ as scalar represents of some $h\in H$. 

Representatives of conjugacy classes $\Sym(\mathcal{L})/H$ in addition to permuting lines of $\mathcal{L}$ also permute subspaces $\mathcal{H}_i$. Now, for any conjugacy class in $g\in\Sym(\mathcal{L})/H$ pick all representatives $g'\in\PGL(\mathcal{H})$ which can be represented by transforms $g''\in\GL(\mathcal{H})$ such that
\begin{displaymath}
\det(M_{i,j})=1\textrm{ for all pairs $\mathcal{H}_i$,$\mathcal{H}_j$ that $g''(\mathcal{H}_i)=\mathcal{H}_j$ and $M_{i,j}=[\langle g''(\psi),\phi\rangle]_{\psi\in\mathcal{B}_i,\phi\in\mathcal{B}_j}$.}
\end{displaymath}
For every conjugacy class $g$ there are in total $(d/p)^{p-1}$ such representatives $g'$. These representatives form a finite subgroup $G_1\leq\Sym(\mathcal{L})$ of order $(d/p)^{p-1}|\Sym(\mathcal{L})/H|$. Another finite subgroup $G_2\leq\PGL(\mathcal{L})$ of order $2^{p-1}$ which stabilizes all lines of $\mathcal{L}$ is represented by all linear transformations acting on every subspace $\mathcal{H}_i$, $i=1,\ldots,p$ as a scalars of the form $\pm1$. Together groups $G_1$ and $G_2$ generate a finite group $G\leq\Sym(\mathcal{L})$ of order at most $|G_1||G_2|$. We claim that $G$ fulfills all the criteria of Proposition \ref{prop:x}:
\begin{enumerate}
\item $G$ is irreducible,
\item $G$ acts transitively on $\mathcal{L}$,
\item the stabilizer of any line $l\in\mathcal{L}$ within $G$ don't stabilize linewise any subspace that properly contains $l$.
\end{enumerate}
The irreducibility of $G$ follows from the observation that $G$ is generated by its subgroups $G_1$ and $G_2$ while $\Sym(\mathcal{L})$ is generated by $G_1$ and $H$. Since (linear) representatives of elements of the group $G_2$ form a linear basis for (linear) representatives of $H$, we conclude that groups $G_2$ and $H$ share same invariant subspaces. Thus, since $\Sym(\mathcal{L})$ is irreducible, $G$ must also be. Similar argument can be also be used to deduce property (3). Transitivity of the action of $G$ on $\mathcal{L}$ follows from the fact that $G_1$ must act transitively on $\mathcal{L}$ since otherwise $\Sym(\mathcal{L})$ will fail to act transitively on $\mathcal{L}$.
\end{proof}

\subsection{Relation between symmetries of frames and line systems}

Group frames define group line systems and highly symmetric frames define highly symmetric line systems. Indeed, a linear representation of a finite group $\rho:G\rightarrow\GL(\mathcal{H})$ defines corresponding projective representation $\tilde{\rho}=\pi\circ\rho$ of $G$ where $\pi:\GL(\mathcal{H})\rightarrow\PGL(\mathcal{H})$ is a quotient map. Now, if $v\in\mathcal{H}$ and $\l=\{av:a\in\mathbb{C}\}$ then vectors of the group frame $\Phi=(\rho(g)v)_{g\in G}$ will represent lines of the line system $\mathcal{L}=(\tilde{\rho}(g)l)_{g\in G}$. Moreover, if corresponding homogeneous frame $\Phi'$ is highly symmetric, then so is corresponding homogeneous line system $\mathcal{L'}$. 

This relationship between group frames and line systems can be partially reversed. From the results of I. Schur we know that for any finite group $\tilde{G}$ there exist at least one central extension $G$ called a \emph{Shur cover} or a \emph{representation group} of $\tilde{G}$, such that every projective representation $\tilde{\rho}$ of $\tilde{G}$ can be \emph{lifted} to the ordinary representation $\rho$ of $G$ in the sense that
\begin{displaymath}
\pi(\rho(g))=\tilde{\rho}(\psi(g)) \textrm{ for all } g\in G,
\end{displaymath}
where $\psi:G\rightarrow\tilde{G}$ is the homomorphism associated with the group extension. 

A line system $\mathcal{L}=(\tilde{\rho}(\tilde{g})l)_{\tilde{g}\in\tilde{G}}$ defines a group frame $\Phi=(\rho(g)v)_{g\in G}$ where vector $v\in\mathcal{H}$ represents a line $l\in\textrm{P}(\mathcal{H})$. Each line $\tilde{\rho}(\tilde{g})l$ in $\mathcal{L}$ is represented by exactly $|\ker(\psi)|$ vectors $\rho(g_1)v$ in $\Phi$ where $g_1\in\psi^{-1}(\tilde{g})$. Let $\mathcal{L}'$ be a homogeneous line system obtained from $\mathcal{L}$ and $\Phi'$ a homogeneous frame obtained from $\Phi$. Now, the high symmetricity of $\mathcal{L}'$ does not necessarily translate into a high symmetricity of $\Phi'$ since linear transforms in $\Sym(\Phi')$ which stabilize line $l\in\mathcal{L}'$ setwise does not necessarily stabilize vectors representing it.

In order to restore correspondence between highly symmetric line systems and highly symmetric frames we need to redefine the latter by utilizing line stabilizer subgroups instead of vector stabilizer subgroups.
\begin{definition}[Generalized highly symmetric frames]\label{def2}
A finite frame $\Phi$ of distinct vectors is generalized highly symmetric if the action of its symmetry group $\Sym(\Phi)$ is irreducible, transitive and stabilizer of any line represented by a frame vector doesn't simultaneously satabilize all lines on any subspace of $\mathcal{H}$ properly containing $l$.
\end{definition}

From Definition \ref{def2} follows that any highly symmetric frame is also generalized highly symmetric. Additionally, any highly symmetric line system $\mathcal{L}'$ can be "lifted" to generalized highly symmetric frame $\Phi'$ in following steps:
\begin{enumerate}
\item Use  (for example) Proposition \ref{prop:x} to determine some finite $\tilde{G}\leq\Sym(\mathcal{L}')$ which determines lines of $\mathcal{L}$ and lift it to ordinary representation $G$.
\item Construct group frame $\Phi=\{gv\}_{g\in G}$ where vector $v$ represents some line $l\in\mathcal{L}'$. Obtain generalized highly symmetric frame $\Phi'$ by homogenizing $\Phi$.
\end{enumerate}
Generalized highly symmetric frame $\Phi'$ represents each line $l\in\mathcal{L}'$ by vectors of the form $e^{2k\pi i/n}v\in l$, $k\in\{0,\ldots,n-1\}$ forming a complex regular $n$-gons. Indeed, $H_l|_l$ (where $H_l\leq\Sym(\Phi')$) is isomorphic to some homomorphism $h:H_l\rightarrow\mathbb{C}\cong l$. An $h$-image of $H_l$ is a finite multiplicative subgroup of $\mathbb{C}$ which must be generated by a single $n$-th root of unity for some $n\in\mathbb{N}$ and form a complex regular $n$-gon. 

\section{Line systems related to irreducible representations of finite groups}

We are interested in highly symmetric line systems, which,  according to Section 2, can always be obtained from generalized highly symmetric frames. There seems to be several methods to construct such frames from representations of finite groups. For example, in \cite{BW} an approach utilizing a linear algebra and explicit linear representations of finite groups was used in construction of highly symmetric frames. In \cite{IJM}, on the other hand, an approach involving spherical functions was used in order to accomplish essentially the same goal. We are going to use a slight extension of the second approach, mainly for the reason that it does not require explicit linear representations of finite groups. In particular, only character tables and subgroup structures of finite groups are needed in construction of generalized highly symmetric frames associated with irreducible representations. Both, characters and subgroup structure can be computed from permutation representations of finite groups which are often available in common computer algebra packages such as Magma and GAP \cite{BCP,G}.

In the first part of this section we continue intoducing representation theory of finite groups needed in construction of group frames. As before, see \cite{CR,W} for more details.
\subsection{Representations of finite groups} 
Let $\rho$ be a representation of the finite group $G$ on the Hilbert space $\mathcal{H}$. The function $\chi_\rho:G\rightarrow\mathbb{C}$ defined by $\chi_\rho(g)=\Tr(\rho(g))$ where $\Tr()$ is a trace of a linear map is the  \emph{character} associated with the representation $\rho$. A character $\chi_\rho$ is called irreducible if $\rho$ is an irreducible representation. If $\rho$ is one-dimensional representation, then $\chi_\rho$ is called a \emph{linear character}. Characters have the following useful properties:
\begin{itemize}
\item characters are \emph{class functions}, i.e., are constant on a given conjugacy class,
\item the set of all irreducible characters $\{\chi_i:i=1,\ldots,N\}$ forms an orthogonal basis of the class functions, i.e.,
\end{itemize}
\begin{displaymath}
\sum_{g\in G}\chi_i(g)\overline{\chi_j(g)}=\begin{cases}
|G|,&\text{if } i=j \\
0,&\text{otherwise,}
\end{cases}
\end{displaymath}
\begin{itemize}
\item representations of $G$ have the same character if and only if they are linearly equivalent,
\item for any character $\chi_\rho$ we have $\chi_\rho(1)=\Dim(\mathcal{H})$ and $\chi_\rho(g^{-1})=\overline{\chi_\rho(g)}$ for all $g\in G$.
\end{itemize}
A restrictions of a representation $\rho$  and a corresponding character $\chi_\rho$ to a subgroup $H\leq G$ are denoted by $\rho\downarrow_H^G$ and $\chi_\rho\downarrow^G_H$ respectively. According to the Maschke's theorem, $\mathcal{H}$ can be decomposed into a direct sum of irreducible subrepresentations. This decomposition is not unique but the number of irreducible constituents of each type is independent of the choice of decomposition. Nevertheless, if all irreducible subrepresentations of the same type are combined into a direct sum, resulting \emph{isotypic} subrepresentation is unique. A projection onto isotypic subspace associated with an irreducible character $\chi$ is given by
\begin{equation}\label{eq:2}
\Pi_{\chi}=\frac{\chi(1)}{|G|}\sum_{g\in G}\overline{\chi}(g)\rho(g).
\end{equation}
Similarly, character $\chi$ can be uniquely decomposed into irreducible components
\begin{displaymath}
\chi=\sum_{i=1}^N a_i\chi_i,
\end{displaymath}
where integers $a_i$ are called \emph{multiplicities} of each irreducible representation in $\chi$.

Let $G$ be a finite group. The convolution of two elements $f,h\in L^2(G)$ is defined by
\begin{displaymath}
f*h(s)=\sum_{g\in G}f(g)h(g^{-1}s)
\end{displaymath}
which turns $L^2(G)$ into an associative group convolution algebra. The \emph{(left) regular representation} of $G$ on $L^2(G)$ is a homomorphism
\begin{displaymath}
\rho:G\rightarrow \GL(L^2(G))\textrm{ such that } \rho(g)v=\delta_{g}*v\textrm{ for all } g\in G\textrm{, } v\in L^2(G),
\end{displaymath}
where
\begin{displaymath}
\delta_g(s)=\begin{cases}
1,&\text{if } s=g \\
0,&\text{otherwise.}
\end{cases}
\end{displaymath}
In the remainder of this section, depending on the context, we will assume that $L^2(G)$ is either a (left) regular representation, a group convolution algebra or both. Note, that by associativity of the group convolution, the $|G|$-dimensional vector space of operators
\begin{equation}\label{eq:x}
 A_h:L^2(G)\rightarrow L^2(G)\textrm{ , } \textrm{ defined by }A_hf=f*h\textrm{ for all }f\in L^2(G)
\end{equation}
is commuting with all elements of the (left) regular representation. Next we will need Schur's lemma, an important statement about irreducible representations.

\begin{lemma}[Schur's lemma] Let $\rho_1$ and $\rho_2$ be two complex irreducible representations of a finite group $G$ on Hilbert spaces $\mathcal{H}_1$ and $\mathcal{H}_2$ respectively. If $\rho_1$ and $\rho_2$ are linearly equivalent, then the space of $G$-linear maps between $\mathcal{H}_1$ and $\mathcal{H}_2$ is one-dimensional. Otherwise, any $G$-linear map $M:\mathcal{H}_1\rightarrow\mathcal{H}_2$ is 0.
\end{lemma}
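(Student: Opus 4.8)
\textit{Proof proposal.} The plan is to exploit the fact that kernels and images of $G$-linear maps are invariant subspaces, together with the algebraic closure of $\mathbb{C}$. First I would let $M\colon\mathcal{H}_1\to\mathcal{H}_2$ be an arbitrary $G$-linear map and observe that $\Ker M$ is a $\rho_1(G)$-invariant subspace of $\mathcal{H}_1$: if $v\in\Ker M$ then $M\rho_1(g)v=\rho_2(g)Mv=0$ for all $g\in G$. Likewise the image $M\mathcal{H}_1$ is a $\rho_2(G)$-invariant subspace of $\mathcal{H}_2$, since $\rho_2(g)Mv=M\rho_1(g)v\in M\mathcal{H}_1$. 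Because $\rho_1$ and $\rho_2$ are irreducible, $\Ker M\in\{0,\mathcal{H}_1\}$ and $M\mathcal{H}_1\in\{0,\mathcal{H}_2\}$, so either $M=0$ or $M$ is a linear isomorphism intertwining $\rho_1$ and $\rho_2$. In particular, if $\rho_1$ and $\rho_2$ are not linearly equivalent no such isomorphism exists, which forces $M=0$; this settles the ``otherwise'' part of the statement.

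For the equivalence case, assume $\rho_1$ and $\rho_2$ are $M_0$-equivalent for some $G$-linear isomorphism $M_0$, and let $M$ be any $G$-linear map $\mathcal{H}_1\to\mathcal{H}_2$. From $M_0\rho_1(g)=\rho_2(g)M_0$ one gets $\rho_1(g)M_0^{-1}=M_0^{-1}\rho_2(g)$, so $M_0^{-1}$ is $G$-linear and hence $N=M_0^{-1}M$ is a $G$-linear endomorphism of $\mathcal{H}_1$, i.e. it commutes with every $\rho_1(g)$. Since $\mathcal{H}_1$ is a finite-dimensional complex vector space (finite dimensionality holds because $G$ is finite, and is in any case a standing assumption of the paper), the characteristic polynomial of $N$ has a root $\lambda\in\mathbb{C}$, so $N-\lambda\,\mathrm{id}_{\mathcal{H}_1}$ is again $G$-linear but has nonzero kernel. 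Applying the conclusion of the previous paragraph with $\rho_2$ replaced by $\rho_1$, this forces $N-\lambda\,\mathrm{id}_{\mathcal{H}_1}=0$, hence $M=\lambda M_0$. Thus every $G$-linear map is a scalar multiple of $M_0$, so the space of such maps is one-dimensional, being spanned by the nonzero map $M_0$.

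The argument is essentially routine; the only genuinely load-bearing ingredient is the algebraic closure of $\mathbb{C}$, invoked to produce the eigenvalue $\lambda$ — this is precisely where the hypothesis that the representations are complex enters, and it is the step that fails over $\mathbb{R}$. The one other point requiring a line of care is verifying that $M_0^{-1}$ is itself $G$-linear, so that $N$ really does commute with $\rho_1$; this is the short computation carried out above.
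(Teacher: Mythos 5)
Your proof is correct and complete: it is the standard argument for Schur's lemma, and the paper itself states this lemma without proof, treating it as a classical fact from the representation-theory literature (cf.\ \cite{CR}). Both halves of your argument --- the invariance of $\Ker M$ and $M\mathcal{H}_1$ forcing $M=0$ or $M$ an isomorphism, and the eigenvalue argument for $M_0^{-1}M$ that uses the algebraic closure of $\mathbb{C}$ --- are sound, and you correctly flag the one step that fails over $\mathbb{R}$.
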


From Schur's lemma follows that each operator $A_h$ can be written as a sum of $G$-linear isomorphisms between irreducible subspaces of $L^2(G)$.

Let $\rho$ be the (left) regular representation and $\chi$ some irreducible character of $G$. According to \eqref{eq:2} the projection $\Pi_\chi$ onto the $\chi$-isotypic subspace of $L^2(G)$ is defined by
\begin{displaymath}
\Pi_{\chi}(f)=(\frac{\chi(1)}{|G|}\sum_{g\in G}\overline{\chi}(g)\rho(g))(f)=\frac{\chi(1)}{|G|}\cdot\sum_{g\in G}\overline{\chi}(g)\delta_{g}*f=\frac{\chi(1)}{|G|}\cdot\overline{\chi}*f
\end{displaymath}
for all $f\in L^2(G)$. We observe that for any $f\in L^2(G)$
\begin{equation}\label{eq:3}
(\overline{\chi}*f)(s)=\sum_{g\in G}\overline{\chi}(g)f(g^{-1}s)=\sum_{g\in G}\overline{\chi}(tg^{-1})f(g)=\sum_{g\in G}\overline{\chi}(g^{-1}t)f(g)=(f*\overline{\chi})(s)
\end{equation}
by using the fact that group characters are class functions. An isotypic subspace associated with an irreducible character $\chi$ of degree $d$ is clearly $d^2$-dimensional and thus is a direct sum of $d$ irreducible subspaces $V_{\chi,k}$ where $k=1,\ldots,d$. From this and the Schur's lemma follows that \eqref{eq:x} defines all $G$-linear operators from $L^2(G)$ to $L^2(G)$. 

\subsection{Frames from projections onto irreducible subspaces of the regular representation}

The regular representation has an important connection with Gram matrices of group tight frames. Let $\rho'$ be a $d$-dimensional representation of a finite group $G$ on a Hilbert space $\mathcal{H}$ and $v\in \mathcal{H}$ such that $\Phi=(\rho'(g)v)_{g\in G}$ is a tight frame. Now, since $\rho'$ must be a unitary representation, the Gram matrix of $\Phi$ can be written in the following form
\begin{displaymath}
G_\Phi=[\langle\rho'(g_2)v,\rho'(g_1)v\rangle]_{g_1,g_2\in G}=[\langle v,\rho'(g_2^{-1}g_1)v\rangle]_{g_1,g_2\in G}.
\end{displaymath}
This can be interpreted as a group convolution matrix associated with the function 
\begin{equation}\label{eq:p}
h:L^2(G)\rightarrow\mathbb{C}\textrm{ such that }h(g)=\langle v,\rho'(g)v\rangle\textrm{ for all }g\in G
\end{equation}
in the sense that $G_\Phi v'=v'*h$ for all $v'\in L^2(G)$. Since $\Phi$ is a tight frame, $G_\Phi$ is a projection matrix (modulo scalar factor) onto some subspace $V\subset L^2(G)$. This subspace stays invariant under (left) regular representation since $G_\Phi\rho(g)=\rho(g)G_\Phi$ for all $g\in G$. Moreover, columns of the Gram matrix $G_\Phi$ define an unitarily equivalent group frame in $V$, equivalence can be defined as linear extension of following function  $M:\Phi\rightarrow V$
\begin{displaymath}
M\rho'(g)v=\frac{1}{\rVert v\lVert_2}\sqrt{\frac{d}{|G|}}G_\Phi[:,g]\textrm{ for all }g\in G.
\end{displaymath}
Similarly, column vectors of any group convolution projection matrices produce group tight frames in $\rho$-invariant subspaces of $L^2(G)$.

Take a representation $\rho':G\rightarrow\U(\mathcal{H})$ of a finite group $G$ and choose a subgroup $H\leq G$. By restricting $\rho'$ to $H$ it is possible to decompose $\mathcal{H}$ into $H$-irreducible subspaces. Let $V$ be one of such subspaces of $\mathcal{H}$ associated with an irreducible character $\chi_V$ of $\rho'\downarrow^{G}_{H}$ and $\Phi_V$ be a $\rho'\downarrow^{G}_H$-frame with an initial vector $v\in V$. The Gram matrix of $\Phi_V$ defines a projection matrix $\Pi_{\Phi_V}$ onto some $\chi_V$-irreducible subspace of $L^2(H)$ defined by the function
\begin{displaymath}
h_V:L^2(H)\rightarrow\mathbb{C}\textrm{ such that }h_V(g)=\frac{\Dim(V)}{\langle v,v\rangle|H|}\langle v,\rho'(g)v\rangle\textrm{ for all }g\in H.
\end{displaymath}
The matrix $\Pi_{\Phi_V}$ can be extended to a projection matrix $\Pi_{\Phi_V}'$ onto some invariant subspace of $L^2(G)$ defined by the function $h:L^2(G)\rightarrow\mathbb{C}$ such that
\begin{equation}\label{eq:ext}
h(s)=\begin{cases}
h_V(s),&\text{if } s\in H \\
0,&\text{otherwise.}
\end{cases}
\end{equation}
In the proposition below we are going to "spin" $\rho'\downarrow^G_H$-frames lying inside $H$-irreducible subspaces of $\mathcal{H}$ by elements of $\rho'(G)$. Obtained $G$-frames will be associated with highly symmetric systems of subspaces.
\begin{proposition}\label{prop:1}
Let $G$ be a finite group and $\rho':G\rightarrow\U(\mathcal{H})$ an irreducible $d$-dimensional representation with character $\chi$. 
Let $H\leq G$, $\nu:H\rightarrow\mathbb{C}$ an irreducible character and $\Pi_{\Phi_1}$ a projection operator onto $\nu$-irreducible subspace of $L^2(H)$. Then there exist the unique subspace $V\subset\mathcal{H}$ of dimension $\nu(1)$ and a vector $v\in V$ unique modulo scalar factor, such that
\begin{equation}\label{eq:5}
\langle v,\rho'(s)v\rangle=\frac{\langle v,v\rangle|H|}{\Dim(V)}\Pi_{\Phi_1}[h,1] \textrm{ for all } s\in H
\end{equation}
if and only if
\begin{equation}\label{eq:6}
\sum_{h\in H}\chi(s)\overline{\nu(s)}=|H|.
\end{equation}
The corresponding group tight frame $\Phi=(\rho'(g)v)_{g\in G}$ has the following Gram matrix
\begin{equation}\label{eq:7}
G_\Phi=\langle v,v\rangle\frac{|G|}{d}\Pi_\chi\Pi_{\Phi_1}'.
\end{equation}
\end{proposition}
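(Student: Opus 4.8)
The plan is to reduce the equivalence to Schur's lemma applied to the restriction $\rho'\downarrow^G_H$, and the Gram matrix identity \eqref{eq:7} to the matrix-coefficient orthogonality relations for $\rho'$ together with Frobenius reciprocity for an induced module. First I would unwind \eqref{eq:5}. Let $h_1(s)=\Pi_{\Phi_1}[s,1]$ be the first column of $\Pi_{\Phi_1}$, i.e.\ the (right-)convolution kernel of the orthogonal projection $\Pi_{\Phi_1}$ onto the prescribed $\nu$-irreducible subspace $U:=\mathrm{Im}(\Pi_{\Phi_1})\subset L^2(H)$. Then \eqref{eq:5} says exactly that the Gram matrix of the $\rho'\downarrow^G_H$-frame $\Phi_V=(\rho'(s)v)_{s\in H}$ equals $\tfrac{\langle v,v\rangle|H|}{\nu(1)}\Pi_{\Phi_1}$; since the right-hand side is $\tfrac{\langle v,v\rangle|H|}{\nu(1)}$ times a rank-$\nu(1)$ projection onto a $\nu$-irreducible subspace of $L^2(H)$, this forces $v$ to span, under $\rho'(H)$, a $\nu(1)$-dimensional subspace $V:=\mathrm{span}\{\rho'(s)v:s\in H\}$ on which $H$ acts as $\nu$, and (by Schur's lemma, as in the paragraph preceding the proposition) $G_{\Phi_V}=\tfrac{\langle v,v\rangle|H|}{\nu(1)}\,\Pi_{\mathrm{Im}(A^H_v)}$, where $A^H_v\colon\mathcal H\to L^2(H)$, $u\mapsto(\langle u,\rho'(s)v\rangle)_{s\in H}$, is the $H$-equivariant analysis operator. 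Hence \eqref{eq:5} holds iff $\mathrm{Im}(A^H_v)=U$. Now $A^H_v$ kills $V^\perp$ (which is $H$-invariant since $V$ is), so it factors through $V\cong\nu$; the map $v\mapsto A^H_v$ is a conjugate-linear injection (if $A^H_v=0$ then $\langle v,v\rangle=0$) of the $\nu(1)$-dimensional $V$ into $\mathrm{Hom}_H(\nu,L^2(H))\cong\mathbb{C}^{\nu(1)}$, hence a bijection, and $\mathrm{Hom}_H(\nu,U)$ is one-dimensional by Schur's lemma. So inside any $\nu$-irreducible $V\subseteq\mathcal H$ there is exactly one line of admissible $v$.

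Next I would settle the equivalence. Every $\nu$-irreducible subspace of $\mathcal H$ lies inside the $\nu$-isotypic component of $\rho'\downarrow^G_H$, whose dimension is $m\,\nu(1)$ with $m=\langle\chi\downarrow^G_H,\nu\rangle_H=\tfrac1{|H|}\sum_{s\in H}\chi(s)\overline{\nu(s)}$. If $m=0$ no admissible $V$ exists. If $m\ge 2$ the isotypic component contains infinitely many distinct $\nu$-irreducible subspaces $V$, each carrying an admissible $v$ by the previous step, and distinct $V$ give non-proportional $v$ (a nonzero vector of a $\nu$-irreducible subspace generates it), so the pair $(V,v)$ is not unique. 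If $m=1$ the $\nu$-isotypic component is itself the unique $\nu$-irreducible subspace $V$, and it carries a unique line of admissible $v$. Thus the asserted unique pair exists if and only if $m=1$, which is \eqref{eq:6}.

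Finally, assuming \eqref{eq:6} and with $v,V$ as above, I would verify \eqref{eq:7}. Since $\rho'$ is irreducible and unitary, $\Phi=(\rho'(g)v)_{g\in G}$ is tight with frame constant $|G|\langle v,v\rangle/d$, so $G_\Phi=\tfrac{|G|\langle v,v\rangle}{d}\Pi_W$ with $W=\mathrm{Im}(A_v)$, where $A_v\colon\mathcal H\to L^2(G)$, $u\mapsto(\langle u,\rho'(g)v\rangle)_{g\in G}$, is the injective $G$-equivariant analysis operator; thus $W\cong\rho'$ is a $\chi$-irreducible subspace of $L^2(G)$, so $\Pi_\chi$ fixes $W$. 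The crux is that $\Pi_{\Phi_1}'$ also fixes $W$ pointwise: with $h$ the zero-extension of $h_1$ from $H$ to $G$, so that $\Pi_{\Phi_1}'$ is right convolution by $h$, the sum over $G$ collapses onto the coset $sH$ and gives
\[
(A_vu*h)(s)=\tfrac{\nu(1)}{\langle v,v\rangle|H|}\sum_{k\in H}\langle u,\rho'(sk^{-1})v\rangle\,\langle v,\rho'(k)v\rangle ;
\]
substituting $w=\rho'(s^{-1})u$, using unitarity of $\rho'$, replacing $w$ by its $V$-component (harmless since $\rho'(k)v\in V$) and applying the matrix-coefficient orthogonality relations for the action of $H$ on $V\cong\nu$, the right-hand side collapses to $\langle w,v\rangle=(A_vu)(s)$, so $A_vu*h=A_vu$ and $W\subseteq\mathrm{Im}(\Pi_{\Phi_1}')$. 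Since by \eqref{eq:3} $\Pi_\chi$ is right convolution by the central element $\tfrac{d}{|G|}\overline\chi$, it commutes with $\Pi_{\Phi_1}'$, so $\Pi_\chi\Pi_{\Phi_1}'$ is the orthogonal projection onto the $\chi$-isotypic part of $\mathrm{Im}(\Pi_{\Phi_1}')$; and $\mathrm{Im}(\Pi_{\Phi_1}')=L^2(G)*h\cong\mathrm{Ind}_H^G\nu$ as a $G$-module, so Frobenius reciprocity makes that $\chi$-isotypic part $d$-dimensional, hence equal to the $d$-dimensional $W$. Therefore $\Pi_\chi\Pi_{\Phi_1}'=\Pi_W$ and $G_\Phi=\langle v,v\rangle\tfrac{|G|}{d}\Pi_\chi\Pi_{\Phi_1}'$, which is \eqref{eq:7}.

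I expect the main obstacle to be the convolution identity $A_vu*h=A_vu$: this is the one place where unitarity of $\rho'$ and the orthogonality relations restricted to $V$ really enter, and one must be careful about which coset the convolution sum runs over and about the conjugate-linear dependence on $v$. The tight-frame and Schur's-lemma bookkeeping in the first two steps, and the Frobenius count giving dimension $d$ for the $\chi$-isotypic part of $\mathrm{Ind}_H^G\nu$, are then routine.
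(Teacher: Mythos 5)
Your proof is correct and follows essentially the same route as the paper's: the multiplicity count $\sum_{s\in H}\chi(s)\overline{\nu(s)}=|H|$ together with Schur's lemma handles existence and uniqueness of $(V,v)$, and \eqref{eq:7} is obtained by identifying the image of the commuting product $\Pi_\chi\Pi_{\Phi_1}'$ with the $\chi$-irreducible subspace of $L^2(G)$ carrying $G_\Phi$ via a dimension count. The only differences are expository: you prove the key containment $\mathrm{Im}(A_v)\subseteq\mathrm{Im}(\Pi_{\Phi_1}')$ explicitly through the matrix-coefficient orthogonality relations and obtain the dimension $d$ from Frobenius reciprocity applied to $\mathrm{Ind}_H^G\nu$, whereas the paper asserts the containment and gets the same dimension by computing the diagonal entries (trace) of $\Pi_\chi\Pi_{\Phi_1}'$.
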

\begin{proof}
If \eqref{eq:6} holds, then the multiplicity of $\nu$-irreducible subrepresentation inside $\rho'\downarrow^G_H$ is one, and there is exactly one $\nu$-irreducible subspace $V\subset\mathcal{H}$. Column vectors of the projection matrix $\Pi_{\Phi_1}$ define a tight frame inside some $\nu$-irreducible subspace $V'\subset L^2(H)$. Now, uniqueness modulo scalar factor of the vector $v$ follows (by Schurs lemma) from one-dimensionality of $H$-unitary maps $M:V'\rightarrow V$.

If \eqref{eq:6} does not hold, then there are two possibilities. Either
 \begin{displaymath}
\sum_{s\in H}\chi(s)\overline{\nu(s)}=0.
\end{displaymath}
and there are no  $\nu$-irreducible subrepresentations inside $\rho'\downarrow^G_H$ meaning that \eqref{eq:5} cannot hold for any vector $v\in\mathcal{H}$. Or
\begin{displaymath}
\sum_{s\in H}\chi(s)\overline{\nu(s)}=k|H| \textrm{ with } k>1
\end{displaymath}
and $\nu$-isotypic subrepresentation of $\rho'\downarrow^G_H$ has dimension greater than $\nu(1)$. In any $\nu$-irreducible subspace exist a vector $v$ such that \eqref{eq:5} holds and uniqueness fails in this case.

The matrix $G_\Phi$ is characterized by the following properties:
\begin{enumerate}
\item Equation \eqref{eq:5} holds,
\item modulo a constant, $G_\Phi$ is a projection matrix onto $\chi$-irreducible subspace of $L^2(G)$.
\end{enumerate}
All $\rho$-invariant subspaces of $L^2(G)$ which satisfy the first condition can be found inside $\Pi_{\Phi_1}'(L^2(G))$. On the other hand, all $\chi$-irreducible subspaces can be found inside $\Pi_{\chi}(L^2(G))$. Now,  $\Pi_\chi$ and $\Pi_{\Phi_1}'$ are commuting because of \eqref{eq:3}. The product $\Pi_\chi\Pi_{\Phi_1}'$ is again a projection matrix onto some invariant subspace and thus defines a tight frame. The squared norm of the vectors of this frame can be computed from diagonal entries of $\Pi_\chi\Pi_{\Phi_1}'$. It is equal to
\begin{displaymath}
\sum_{s\in H}\frac{d}{|G|}\chi(s)h(s)=\sum_{s\in H}\frac{d}{|G|}\nu(s)h(s)=\frac{d}{|G|}
\end{displaymath}
since $\nu$ occurs in $\chi\downarrow_H^G$ with multiplicity 1. Thus, $\Pi_\chi\Pi_{\Phi_1}'$ projects onto unique $\chi$-irreducible subspace of $L^2(G)$ and the matrix $\langle v,v\rangle\frac{|G|}{n}\Pi_\chi\Pi_{\Phi_1}'$  satisfy both conditions (1) and (2).
\end{proof}
Below are some comments related to Proposition \ref{prop:1}.
\begin{itemize}
\item Frame vectors of $\Phi$ labelled by the elements of the same $G/H$-coset form a tight frame in some subspace isomorphic to $V$. Distinct subspaces of this type form a highly symmetric system of subspaces. If $\nu$ is a linear character, frame $\Phi$ defines a highly symmetric line system and a homogenization of $\Phi$ produces a generalized highly symmetric frame $\Phi'$.
\item According to \eqref{eq:7} the first row of $G_\Phi$ is given by
\end{itemize}
\begin{equation}\label{eq:sph}
G_\Phi[1_G,g]=\frac{\langle v,v\rangle}{|H|}\sum_{s\in H}\chi(sg^{-1})\overline{\nu(s)} \textrm{ for all } g\in G,
\end{equation}
\begin{itemize}
\item[$$]containing all the necessary information since $G_\Phi$ is a group convolution matrix. If $\nu$ is a linear character, only the values on the representatives of $(H,H)$-double cosets require explicit computation. In this case equation \eqref{eq:sph} (modulo factor $\langle v,v\rangle$) defines a \emph{twisted sphercal function}. The twisted spherical function has constant modulus on each $(H,H)$-double coset of $G$. The number of the frame angles is thus smaller than the number of different $(H,H)$-double cosets in $G$.
\item If $\sum_{h\in H}\chi(h)\overline{\nu(h)}=k|H|$ with $k>1$ then \eqref{eq:7} still produces Gram matrices of group tight frames. In this case matrix $\Pi_\chi\Pi_\nu$ projects onto reducible subspace of $L^2(G)$.
\item If $\sum_{h\in H}\chi(h)\overline{\nu(h)}=k|H|$ with $k>1$,  there exist a $(k-1)$-dimensional family of $\rho'(G)$-frames in $\mathcal{H}$ such that \eqref{eq:5} holds for the initial vector of the frame (i.e., vector indexed by the identity $1_G$).
\end{itemize}

\subsection{Angular relations between different group frames} We already constructed Gram matrices of generalized highly symmetric frames, next we are going to acquire additional information regarding angular relations between these frames. 

As before, let $\rho':G\rightarrow\U(\mathcal{H})$ be an irreducible $d$-dimensional representation of a finite group $G$ with character $\chi$. Let $v_1,v_2\in\mathcal{H}$ and $\Phi_1=(\rho'(g)v_1)_{g\in G}$ and  $\Phi_2=(\rho'(g)v_2)_{g\in G}$ be two group frames in $\mathcal{H}$ with Gram matrices $G_{\Phi_1}$ and $G_{\Phi_2}$ respectively. Let $V_1\subset L^2(G)$ be the $\chi$-invariant subspace defined by $G_{\Phi_1}$ and  $V_2\subset L^2(G)$ be the $\chi$-invariant subspace defined by $G_{\Phi_2}$. According to Schur's lemma there exist following $G$-unitary maps
\begin{equation}\label{eq:G}
M_1:\mathcal{H}\rightarrow V_1\textrm{, }M_2:\mathcal{H}\rightarrow V_2\textrm{ and }M:V_1\rightarrow V_2,
\end{equation}
unique modulo phases which can be choosen in such a way that $M_1\rho'(g_1)v_1$ and $M_2\rho'(g_2)v_2$ are
\begin{displaymath}
\frac{1}{\lVert v_1\rVert_2}\sqrt{d/|G|}G_{\Phi_1}[:,g_1]\textrm{ and }\frac{1}{\lVert v_2\rVert_2}\sqrt{d/|G|}G_{\Phi_2}[:,g_2]
\end{displaymath}
respectively for all $g_1,g_2\in G$. According to Schur's lemma we know that
\begin{displaymath}
F:=M_2^{-1}\circ M\circ M_1=c\cdot\textrm{Id}_\mathcal{H}
\end{displaymath}
where $c$ is a phase. Now, for $\rho'(g_1)v_1$ and  $\rho'(g_2)v_2$ we have
\begin{displaymath}
c\langle \rho'(g_1)v_1,\rho'(g_2)v_2\rangle=\langle F\rho'(g_1)v_1,\rho'(g_2)v_2\rangle=\langle M\circ M_1\rho'(g_1)v_1,M_2\rho'(g_2)v_2\rangle.
\end{displaymath}
Next, observe that Id$_{L^2(G)}$ is (trivially) a $G$-unitary projection onto $L^2(G)$. By decomposing $L^2(G)$ in two ways $L^2(G)=V_i\oplus V_i^\bot$, we obtain
\begin{displaymath}
\textrm{Id}_{L^2(G)}=\Pi_{V_i}+\Pi_{V_i^\bot} \textrm{, }i=1,2.
\end{displaymath}
Since rows of $M$ lay entirely in $\overline{V}_1$ while columns lay entirely in $V_2$ we have
\begin{displaymath}
M[g_2,g_1]=\langle M\delta_{g_1},\delta_{g_2}\rangle=\langle M(\Pi_{V_1}[:,g_1]+\Pi_{V_1^\bot}[:,g_1]),\delta_{g_2}\rangle=
\end{displaymath}
\begin{displaymath}
=\langle\Pi_{V_1}[:,g_1],M^*(\Pi_{V_1}[:,g_2]+\Pi_{V_1^\bot}[:,g_2])\rangle=\frac{cd}{\lVert v_1\rVert_2\lVert v_2\rVert_2|G|}\langle \rho'(g_1)v_1,\rho'(g_2)v_2\rangle.
\end{displaymath}
Thus, the following result is proved.
\begin{proposition}
Let $M$ be as in \eqref{eq:G} then
\begin{equation}\label{eq:I}
M_{\Phi_1\Phi_2}[g_2,g_1]:=
\frac{\lVert v_1\rVert_2\lVert v_2\rVert_2|G|}{cd}M[g_2,g_1]=\langle \rho'(g_1)v_1,\rho'(g_2)v_2\rangle\textrm{ for all }g_1,g_2\in G.
\end{equation}
\end{proposition}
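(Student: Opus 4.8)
The plan is to organize the chain of identities established in the paragraph preceding the statement into a clean three-step argument, all of which rests on two applications of Schur's lemma. First I would note that, since $V_1$ and $V_2$ are both copies of the irreducible subspace associated with $\chi$ inside $L^2(G)$, the $G$-unitary $M\colon V_1\to V_2$ of \eqref{eq:G} is determined up to a phase; and that the composite $F=M_2^{-1}\circ M\circ M_1$ is a $G$-linear endomorphism of the irreducible space $\mathcal{H}$, hence $F=c\cdot\textrm{Id}_\mathcal{H}$ for a scalar $c$, with $|c|=1$ because $M_1$, $M_2$ and $M$ are all unitary. The phases of $M_1$ and $M_2$ are pinned down once and for all by the requirement that $M_i\rho'(g)v_i=\tfrac{1}{\lVert v_i\rVert_2}\sqrt{d/|G|}\,G_{\Phi_i}[:,g]$, so $c$ is a well-defined constant depending only on $v_1,v_2$ and the chosen $M$.

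The second step translates the matrix entry $M[g_2,g_1]=\langle M\delta_{g_1},\delta_{g_2}\rangle$ into an inner product of frame vectors. Since $\rho'$ is unitary and irreducible, each orbit $\Phi_i=(\rho'(g)v_i)_{g\in G}$ is a tight frame, and a short trace computation identifies its frame constant as $|G|\lVert v_i\rVert_2^2/d$; consequently $\tfrac{d}{|G|\lVert v_i\rVert_2^2}G_{\Phi_i}$ is exactly the orthogonal projection $\Pi_{V_i}$ of $L^2(G)$ onto $V_i$, whence $\Pi_{V_i}[:,g]=\tfrac{1}{\lVert v_i\rVert_2}\sqrt{d/|G|}\,M_i\rho'(g)v_i$. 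Inserting the decompositions $\delta_{g_1}=\Pi_{V_1}\delta_{g_1}+\Pi_{V_1^\bot}\delta_{g_1}$ and $\delta_{g_2}=\Pi_{V_2}\delta_{g_2}+\Pi_{V_2^\bot}\delta_{g_2}$, and using that $M$ (extended by zero off $V_1$) annihilates $V_1^\bot$ and has range in $V_2$, collapses the entry to $M[g_2,g_1]=\langle M\Pi_{V_1}[:,g_1],\Pi_{V_2}[:,g_2]\rangle=\tfrac{d}{\lVert v_1\rVert_2\lVert v_2\rVert_2|G|}\langle MM_1\rho'(g_1)v_1,M_2\rho'(g_2)v_2\rangle$.

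The third step feeds in $F=c\cdot\textrm{Id}_\mathcal{H}$: using that $M_2$ is unitary, $\langle MM_1\rho'(g_1)v_1,M_2\rho'(g_2)v_2\rangle=\langle M_2^{-1}MM_1\rho'(g_1)v_1,\rho'(g_2)v_2\rangle=\langle F\rho'(g_1)v_1,\rho'(g_2)v_2\rangle=c\langle\rho'(g_1)v_1,\rho'(g_2)v_2\rangle$, and combining with the previous identity and dividing by $cd/(\lVert v_1\rVert_2\lVert v_2\rVert_2|G|)$ gives exactly \eqref{eq:I}. I expect the only genuinely delicate point to be the bookkeeping in the second step: one must be sure that ``extending $M$ by zero off $V_1$'' is legitimate, i.e.\ that the $|G|\times|G|$ matrix $M$ has columns in $V_2$ and rows in $\overline{V_1}$ — equivalently that $M^*$ maps $V_2$ into $V_1$ and kills $V_2^\bot$ — since this is what lets the two resolutions of $\textrm{Id}_{L^2(G)}$ be dropped on the correct sides; a conjugation or phase slip would hide there. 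Everything else is simply tracking the normalisation factors $\lVert v_i\rVert_2$, $d$ and $|G|$.
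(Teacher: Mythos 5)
Your proposal is correct and follows essentially the same route as the paper: Schur's lemma gives $F=M_2^{-1}\circ M\circ M_1=c\cdot\textrm{Id}_\mathcal{H}$, the normalized Gram matrices are identified with the projections $\Pi_{V_i}$, and the resolutions $\textrm{Id}_{L^2(G)}=\Pi_{V_i}+\Pi_{V_i^\bot}$ collapse the matrix entry $\langle M\delta_{g_1},\delta_{g_2}\rangle$ onto the frame-vector inner product. The ``delicate point'' you flag (that $M$ kills $V_1^\bot$ and $M^*$ kills $V_2^\bot$) is exactly the step the paper invokes when it says the rows of $M$ lie in $\overline{V_1}$ and the columns in $V_2$, so your bookkeeping matches the paper's.
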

Matrix $M$ can be constructed explicitly modulo scalar factor.  We have
\begin{displaymath}
\Pi_{V_i}f=\frac{n}{\langle v_i,v_i\rangle|G|}\cdot f*h_i\textrm{ for all }f\in L^2(G)
\end{displaymath}
where $h_i$ are defined as in \eqref{eq:p} from Gram matrices $G_{\Phi_i}$, $i=1,2$.
Let $\chi_a:G\rightarrow\mathbb{C}$ where $a\in G$, be defined by $\chi_a(g)=\chi(ga)$ for all $g\in G$ and $A_{\chi,a}:L^2(G)\rightarrow L^2(G)$ be defined by
\begin{displaymath}
A_{\chi,a}f=f*\overline{\chi_a}\textrm{ for all } f\in L^2(G).
\end{displaymath}
\begin{proposition}For all $f\in L^2(G)$
\begin{equation}\label{eq:8}
\lambda_a Mf=\Pi_{V_2}A_{\chi,a}\Pi_{V_1}f
\end{equation}
where $\lambda_a\in\mathbb{C}$ is not identically zero for all $a\in G$.
\end{proposition}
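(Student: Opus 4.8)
The plan is to show that $\Pi_{V_2}A_{\chi,a}\Pi_{V_1}$ is a $G$-linear map from $V_1$ to $V_2$, invoke Schur's lemma to conclude it is a scalar multiple of $M$, and then verify the scalar $\lambda_a$ is not identically zero by exhibiting a single input on which the map is nonzero. First I would check $G$-linearity: both projections $\Pi_{V_1}$ and $\Pi_{V_2}$ are right-convolution operators (as noted, $\Pi_{V_i}f = \tfrac{n}{\langle v_i,v_i\rangle|G|} f*h_i$), and $A_{\chi,a}$ is also right-convolution by $\overline{\chi_a}$; since right-convolution operators all commute with the left regular representation $\rho(g)$, the composite $\Pi_{V_2}A_{\chi,a}\Pi_{V_1}$ commutes with every $\rho(g)$ and restricts to a $G$-linear map $V_1\to V_2$ (it kills $V_1^\perp$ because of the initial $\Pi_{V_1}$, and its image lands in $V_2$ because of the final $\Pi_{V_2}$). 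Since $V_1$ and $V_2$ both carry the irreducible representation $\chi$, they are linearly equivalent, so by Schur's lemma the space of $G$-linear maps $V_1\to V_2$ is one-dimensional, spanned by $M$. Hence $\Pi_{V_2}A_{\chi,a}\Pi_{V_1} = \lambda_a M$ for some $\lambda_a\in\mathbb{C}$; equivalently, extending by zero on $V_1^\perp$, \eqref{eq:8} holds for all $f\in L^2(G)$.

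It remains to show $\lambda_a\neq 0$ for every $a\in G$. The cleanest route is to compute a matrix entry of $\Pi_{V_2}A_{\chi,a}\Pi_{V_1}$ directly and relate it back to $\chi$. Using $A_{\chi,a}f = f*\overline{\chi_a}$ and the fact (from \eqref{eq:3} and the class-function property) that convolution by a character-type function is central enough to commute past the isotypic projections, one gets that $\Pi_{V_2}A_{\chi,a}\Pi_{V_1}$ is, up to the known constants, convolution by $\overline{\chi_a}$ composed with the rank-one-per-irreducible-block structure of $\Pi_\chi$. Concretely, I would evaluate $\langle (\Pi_{V_2}A_{\chi,a}\Pi_{V_1})\delta_{1_G},\, \delta_{b}\rangle$ for a suitable $b$ and show it equals a nonzero multiple of $\langle \rho'(1)v_1,\rho'(b a^{-1}\text{-shift})v_2\rangle$-type quantity; since $\Phi_1,\Phi_2$ are genuine frames (spanning $\mathcal{H}$), not all such inner products vanish, so $\lambda_a\neq 0$. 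Alternatively, and more robustly: if $\lambda_a=0$ then $A_{\chi,a}$ would map $V_1$ into $V_2^\perp$; but $A_{\chi,a}$ acts on the $\chi$-isotypic subspace $\Pi_\chi L^2(G)$ as an invertible operator (it is $\rho(\text{something})$-conjugate to convolution by $\overline{\chi}$ followed by a shift, and on the $\chi$-isotypic block convolution by $\overline{\chi}$ is $\tfrac{|G|}{\chi(1)}$ times the identity, while the shift by $a$ is a permutation — both invertible), and $V_1,V_2$ are both inside $\Pi_\chi L^2(G)$ with $V_1$ not orthogonal to $V_2$ (they carry equivalent irreducibles sitting in the same isotypic component, and one can choose the $\chi$-isotypic decomposition so that $V_1$ meets $V_2$ nontrivially, or simply note $\dim \Pi_\chi L^2(G) = \chi(1)^2$ forces overlap when $\chi(1)\geq 1$ — more carefully, invertibility of $A_{\chi,a}$ on the isotypic block already forces $A_{\chi,a}V_1$ to be a $\chi(1)$-dimensional $G$-invariant subspace of $\Pi_\chi L^2(G)$, hence not contained in $V_2^\perp$ unless... ). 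This is the step I expect to be the main obstacle: pinning down precisely why the single specific map $A_{\chi,a}$ does not happen to annihilate the particular copy $V_1$.

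The honest resolution of that obstacle is to reduce to the regular-representation model where everything is explicit. On $L^2(G)$, the operator $A_{\chi,a}$ is right-convolution by $\overline{\chi_a}$, and $\overline{\chi_a} = \overline{\chi}_a$ where $\chi_a(g)=\chi(ga)$; writing $\overline{\chi_a}(g) = \overline{\chi(ga)}$ and using $\chi(ga)=\sum_k \rho'(g)_{kk'}\cdots$ one sees $A_{\chi,a}$ restricted to the $\chi$-isotypic block $\Pi_\chi L^2(G)\cong \mathrm{End}(\mathcal{H})$ (via $f\mapsto \tfrac{\chi(1)}{|G|}\sum_g f(g)\rho'(g)$) becomes, up to nonzero scalar, the map $X\mapsto X\rho'(a)$ on $\mathrm{End}(\mathcal{H})$, which is visibly invertible with inverse $X\mapsto X\rho'(a)^{-1}$. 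Under this isomorphism the $\chi$-irreducible subspaces $V_1,V_2$ correspond to subspaces of the form $\{X : X = w_1 u^*\ \text{ranging over } u\}$ and $\{w_2 u^*\}$ for fixed nonzero $w_1,w_2$ — i.e. "column" subspaces — and right-multiplication by the invertible $\rho'(a)$ permutes these column subspaces among themselves while never sending a nonzero column subspace into the orthogonal complement of another column subspace of the same block (because right multiplication by an invertible matrix is a bijection on $\mathrm{End}(\mathcal{H})$ that preserves the Frobenius-orthogonality pairing up to the automorphism $X\mapsto X\rho'(a)$, and two column subspaces $w_2\mathcal{H}^*$ and its complement cannot both absorb the full $\chi(1)$-dimensional image). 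Therefore $\Pi_{V_2}A_{\chi,a}\Pi_{V_1}\neq 0$, so $\lambda_a\neq 0$, completing the proof. I would present the $\mathrm{End}(\mathcal{H})$-model computation in detail since it is where the real content lies, and keep the Schur's-lemma step brief.
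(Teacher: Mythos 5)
Your first step is sound and matches the substance of the paper's argument: $\Pi_{V_2}$, $A_{\chi,a}$ and $\Pi_{V_1}$ are all right-convolution operators, hence commute with the left regular representation, so the composite is $G$-linear, kills $V_1^{\bot}$ and lands in $V_2$; Schur's lemma then gives $\Pi_{V_2}A_{\chi,a}\Pi_{V_1}=\lambda_a M$. The paper reaches the same conclusion by expanding $A_{\chi,a}$ as a sum of $G$-linear isomorphisms between the irreducible blocks of two decompositions of the $\chi$-isotypic component and noting that the two compressions kill every term except $\lambda_a M$; your version is shorter but equivalent.

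The second half has a genuine gap. The proposition asserts only that the family $(\lambda_a)_{a\in G}$ is not identically zero, i.e.\ that \emph{some} $a$ gives $\lambda_a\neq 0$; you set out to prove the stronger statement that $\lambda_a\neq0$ for \emph{every} $a$, and that statement is false. In your own $\textrm{End}(\mathcal{H})$ model the irreducible subspaces of the isotypic block are the column subspaces $\mathcal{H}u^{*}$, right multiplication by $\rho'(a)$ carries $\mathcal{H}u_1^{*}$ onto $\mathcal{H}(\rho'(a)^{*}u_1)^{*}$, and two column subspaces $\mathcal{H}v^{*}$, $\mathcal{H}u_2^{*}$ are Frobenius-orthogonal exactly when $\langle v,u_2\rangle=0$; hence $\lambda_a=0$ precisely when $\langle \rho'(a)^{*}u_1,u_2\rangle=0$, which does occur for some $a$ in general. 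The paper's first example exhibits a vanishing entry, and the remark following Proposition \ref{prop:4} explicitly tells the reader to try a different $a$ whenever $\lambda_a=0$. So the assertion in your argument that right multiplication by an invertible matrix never sends one column subspace into the orthogonal complement of another is exactly where the proof fails --- the step you yourself flagged as the main obstacle. The correct (weaker) conclusion follows instead from the fact that the translates $\overline{\chi_a}$, $a\in G$, span the conjugate of the whole $\chi$-isotypic component: $M$ is a nonzero right convolution by a function in that span, so $M=\sum_{a}b_aA_{\chi,a}$ composed with $\Pi_{V_1}$ for suitable coefficients $b_a$, and compressing by $\Pi_{V_2}(\cdot)\Pi_{V_1}$ yields $M=\bigl(\sum_a b_a\lambda_a\bigr)M$, which forces $\lambda_a\neq0$ for at least one $a$. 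This is the paper's argument and is what you should prove in place of the universal nonvanishing claim.
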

\begin{proof}
Let $V_\chi$ be the $\chi$-isotypic component of $L^2(G)$. We perform two different decompositions of $V_\chi$ into $\chi$-irreducible orthogonal subspaces:
\begin{displaymath}
V_\chi=V_1\oplus W_2\oplus\ldots\oplus W_n \textrm{ and } V_\chi=V_2\oplus W_2'\oplus\ldots\oplus W_n'.
\end{displaymath}
Since function $\chi_a\in V_\chi$, according to Schur's lemma $A_{\chi,a}$ can be expressed in the form
\begin{displaymath}
A_{\chi,a}=\lambda_{a} M+\sum_{j=2}^N\lambda_{a,1,j}L_{V_1,W_j'}+\sum_{i,j=2}^{N}\lambda_{a,i,j}L_{W_i,W_j'}+\sum_{i=2}^N\lambda_{a,i,1}L_{W_i,V_2}
\end{displaymath}
where $\lambda_a,\lambda_{a,i,j}\in\mathbb{C}$ are constants and $L_{X,Y}:X\rightarrow Y$ are $G$-linear isomorphisms. Now, 
\begin{displaymath}
(\sum_{a,i,j=2}^{N}\lambda_{i,j}L_{W_i,W_j'}+\sum_{i=2}^N\lambda_{a,i,1}L_{W_i,V_2})\Pi_{V_1}=0
\end{displaymath}
and
\begin{displaymath}
(\lambda_{a} M+\sum_{j=2}^N\lambda_{a,1,j}L_{V_1,W_j'})\Pi_{V_1}=\lambda_{a} M+\sum_{j=2}^N\lambda_{a,1,j}L_{V_1,W_j'}.
\end{displaymath}
Similarly,
\begin{displaymath}
\Pi_{V_2}(\sum_{j=2}^N\lambda_{a,1,j}L_{V_1,W_j'}+\sum_{a,i,j=2}^{N}\lambda_{i,j}L_{W_i,W_j'})=0
\end{displaymath}
and
\begin{displaymath}
\Pi_{V_2}(\lambda_{a} M+\sum_{i=2}^N\lambda_{a,i,1}L_{W_i,V_2})=\lambda_{a} M+\sum_{i=2}^N\lambda_{a,i,1}L_{W_i,V_2}.
\end{displaymath}
By combining four previous equations together we get \eqref{eq:8}. Since $M$ is $G$-linear, it must be of the form \eqref{eq:x}. On the other hand, all rows of $M$ lie in $\overline{V_1}\subset \overline{V_\chi}$. Since functions $\chi_a$ form an overdefined basis of $V_\chi$, we can express $M$ in the form
\begin{displaymath}
Mf=f*\sum_{a\in G}b_a\overline{\chi_a}\textrm{ for all } f\in L^2(G)
\end{displaymath}
with some constants $b_a\in\mathbb{C}$. This proves that $\lambda_a$ is not zero for all $a\in G$.
\end{proof}
We are especially interested in the situation where both $G$-frames $\Phi_1$ and $\Phi_2$ are related to generalized highly symmetric frames in the sense of Proposition \ref{prop:1}. In this case we can obtain simpler expression for $M$.

\begin{proposition}\label{prop:4} Let $\rho':G\rightarrow\U(\mathcal{H})$ be a $d$-dimensional irreducible representation of a finite group $G$ with a character $\chi$. Let $H_1,H_2\leq G$ and $\nu_1,\nu_2$ be linear characters of $H_1$ and $H_2$ respectively such that \eqref{eq:6} holds in both cases. Let $\Pi_{\nu_i}$, $i=1,2$ be projection operators onto $\nu$-irreducible subspaces of $L^2(H_i)$, and $\Pi_{\nu_i}'$ their extensions as in \eqref{eq:ext}. Let $\Phi_1$ and $\Phi_2$ be group frames with Gram matrices
\begin{displaymath}
G_{\Phi_i}=\langle v_i,v_i\rangle\frac{|G|}{n}\Pi_\chi\Pi_{\nu_i}'\textrm{, }i=1,2.
\end{displaymath}
Then the $G$-linear map $M$ as in \eqref{eq:G} is defined by
\begin{equation}\label{eq:9}
\lambda_aM[s,g]=\frac{1}{|H_1||H_2|}\sum_{h_1\in H_1}\sum_{h_2\in H_2}\overline{\nu_2(h_2)}\chi(a^{-1}h_2s^{-1}gh_1)\overline{\nu_1(h_1)} \textrm{ for all } s,g\in G
\end{equation}
where $\lambda_a\in\mathbb{C}$ depends only on $a\in G$.
\end{proposition}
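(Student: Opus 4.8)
The plan is to specialize the general formula \eqref{eq:8} for the intertwiner $M$ to the case where both $\Phi_1$ and $\Phi_2$ arise from the construction of Proposition \ref{prop:1}, and then to simplify the resulting convolution expression using the fact that $\nu_1,\nu_2$ are linear characters supported on $H_1,H_2$. Recall from Proposition \ref{prop:1} that in this situation the projection $\Pi_{V_i}$ onto the $\chi$-invariant subspace $V_i\subset L^2(G)$ defined by $G_{\Phi_i}$ equals (modulo a positive scalar absorbed into $\langle v_i,v_i\rangle$) the operator $\Pi_\chi\Pi_{\nu_i}'$; since both factors are convolution operators, $\Pi_{V_i}f = \frac{d}{|G|}\, f * k_i$ where $k_i = \overline{\chi} * \tilde{h}_i$ and $\tilde{h}_i$ is the extension by zero of the normalized indicator-type function $h\mapsto \frac{1}{|H_i|}\overline{\nu_i(h)}$ on $H_i$. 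Concretely, $\Pi_{\nu_i}'$ is convolution by the function $s\mapsto \frac{1}{|H_i|}\overline{\nu_i(s)}$ for $s\in H_i$ and $0$ otherwise (this is \eqref{eq:ext} applied to the group-algebra projection onto a linear-character isotypic component of $L^2(H_i)$).

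Next I would substitute these convolution kernels into \eqref{eq:8}. Writing everything out, $\lambda_a M f = \Pi_{V_2} A_{\chi,a} \Pi_{V_1} f$ becomes, up to the scalar $\lambda_a$ (which collects all the positive normalizing constants together with the Schur phase), a threefold convolution of $f$ with $\overline{\nu_1}$-on-$H_1$, then with $\overline{\chi_a}$, then with $\overline{\nu_2}$-on-$H_2$, composed additionally with the two copies of $\Pi_\chi$ coming from the two $\Pi_{V_i}$'s. The key simplification is that $\Pi_\chi$ is itself convolution by $\frac{\chi(1)}{|G|}\overline{\chi}$, and repeated convolution with $\overline{\chi}$ reproduces $\overline{\chi}$ up to a scalar on the $\chi$-isotypic component (since $\overline{\chi}*\overline{\chi} = \frac{|G|}{\chi(1)}\overline{\chi}$ on $V_\chi$, a standard identity following from \eqref{eq:2} and idempotency); hence all the extra $\overline{\chi}$-factors and $A_{\chi,a}$ collapse to a single $\overline{\chi_a}$ up to rescaling $\lambda_a$. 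What remains is precisely convolution by the function
\begin{displaymath}
g\mapsto \frac{1}{|H_1||H_2|}\sum_{h_1\in H_1}\sum_{h_2\in H_2}\overline{\nu_2(h_2)}\,\overline{\chi_a}\big(h_2^{-1}g^{-1}h_1\big)^{\!*}\,\overline{\nu_1(h_1)},
\end{displaymath}
and reading off the $[s,g]$ matrix entry of this convolution operator (i.e.\ $M[s,g] = (\delta_g * (\cdot))(s)$, so the kernel is evaluated at $s^{-1}g$-type arguments, with $\chi_a(x)=\chi(xa)$ and $\chi(x^{-1})=\overline{\chi(x)}$ used to flip conjugates) yields exactly \eqref{eq:9}.

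The main obstacle I anticipate is bookkeeping rather than conceptual: carefully tracking the direction of each convolution (left vs.\ the right-convolution operators $A_h$ of \eqref{eq:x}), the placement of inverses and complex conjugates under the identities \eqref{eq:3} and $\chi(g^{-1})=\overline{\chi(g)}$, and confirming that every positive constant — the $d/|G|$ factors, the $\chi(1)/|G|$ from each $\Pi_\chi$, the $\langle v_i,v_i\rangle$, and the Schur phase $c$ — can legitimately be swept into the single symbol $\lambda_a$, which by the previous proposition is known to be nonzero. A secondary point requiring care is justifying that the two $\Pi_\chi$ factors really do collapse without killing $M$: this is where one invokes that $\nu_i$ occurs in $\chi\!\downarrow^G_{H_i}$ with multiplicity exactly one (guaranteed by \eqref{eq:6}), so that $\Pi_\chi\Pi_{\nu_i}'$ projects onto a genuine $\chi$-irreducible subspace and the composite operator in \eqref{eq:8} is the nonzero Schur intertwiner $M$ rather than $0$. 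Once these normalization issues are settled, formula \eqref{eq:9} follows by direct expansion.
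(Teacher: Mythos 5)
Your proposal follows essentially the same route as the paper's proof: specialize \eqref{eq:8} to $\lambda_a Mf=\Pi_{\nu_2}'A_{\chi,a}\Pi_{\nu_1}'f$ by discarding the $\Pi_\chi$ factors (which act as the identity on $V_\chi$), recognize the remaining operator as convolution with the triple product $\phi_{\nu_1}*\overline{\chi_a}*\phi_{\nu_2}$ supported via \eqref{eq:ext}, and expand that convolution with a change of variables and $\chi(g^{-1})=\overline{\chi(g)}$ to read off \eqref{eq:9}. The only differences are presentational (you justify dropping $\Pi_\chi$ via idempotency of convolution with $\overline{\chi}$ rather than by citing that it is the identity on $V_\chi$, and you defer the inverse/conjugate bookkeeping that the paper carries out explicitly), so the argument is correct and matches the paper's.
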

\begin{proof} By \eqref{eq:8} we obtain 
\begin{displaymath}
\lambda_a Mf=\Pi_\chi\Pi_{\nu_2}'A_{\chi,a}\Pi_\chi\Pi_{\nu_1}'f \textrm{ for all }f\in L^2(G).
\end{displaymath}
We can omit operators $\Pi_\chi$ since they both are identities  on $V_\chi$. We get
\begin{equation}\label{eq:new}
\lambda_a Mf=\Pi_{\nu_2}'A_{\chi,a}\Pi_{\nu_1}'f=f*(\phi_{\nu_1}*\overline{\chi_a}*\phi_{\nu_2}) \textrm{ for all }f\in L^2(G)
\end{equation}
where for $i=1,2$
\begin{displaymath}
\phi_{\nu_i}(s)=\begin{cases}
\nu_i(s)/|H_i|,&\text{if } s\in H_i \\
0,&\text{otherwise.}
\end{cases}
\end{displaymath}
Expression in the parentesis in \eqref{eq:new} can be denoted my $m_a$ and computed explicitly
\begin{equation}\label{eq:I1}
m_a(g)=\sum_{h_1\in G}\phi_{\nu_1}(h_1)\sum_{h_2\in G}\overline{\chi(h_2a)}\phi_{\nu_2}(h_2^{-1}h_1^{-1}g)\textrm{ for all }g\in G.
\end{equation}
By substituting $h_2\rightarrow h_1^{-1}rh_2^{-1}$ we get
\begin{displaymath}
m_a(g)=\sum_{h_1\in G}\phi_{\nu_1}(h_1)\sum_{h_2\in G}\overline{\chi(h_1^{-1}gh_2^{-1}a)}\phi_{\nu_2}(h_2)=\frac{1}{|H_1||H_2|}\sum_{h_1\in H_1}\sum_{h_2\in H_2}\overline{\nu_1(h_1)}\chi(a^{-1}h_2g^{-1}h_1)\overline{\nu_2(h_2)}
\end{displaymath}
for all $g\in G$, which is exactly what we needed to prove.
\end{proof}
Some comments regarding Proposition \ref{prop:4}.
\begin{itemize}
\item Unless $\lambda_a=0$, by renormalizing $\lambda_aM$ it is possible to reconstruct $M_{\Phi_1\Phi_2}$ modulo phase. If for some $a\in G$  $\lambda_a=0$, try different  $a$ in \eqref{eq:9} until $\lambda_a\neq0$. 
\item The function $m_a$ in the proof of Proposition \ref{prop:4} has the following property 
\end{itemize}
\begin{displaymath}
m_a(t_1gt_2)=\overline{\nu_1(t_1)}\overline{\nu_2(t_2)}m_a(g)\textrm{ for all }t_1\in H_1,t_2\in H_2\textrm{ and }g\in G.
\end{displaymath}
\begin{itemize}
\item[$$]I.e., has constant modulus on $(H_1,H_2)$-double cosets of $G$. The number of distinct angles between vectors of $\Phi_1$ and $\Phi_2$ is therefore limited by the number of $(H_1,H_2)$-double cosets of $G$.
\item Column vectors of $\lambda_a M$ define a $G$-frame $\tilde{\Phi}_1$ in $V_2$ which is modulo scalar factor unitarily equivalent to $\Phi_1$ while column vectors of $G_{\Phi_2}$ define $G$-frame $\tilde{\Phi}_2$ which is modulo scalar factor unitarily equivalent to $\Phi_2$.
\end{itemize}

\section{Examples}
In this section we give some examples of frames which can be constructed by using propositions \ref{prop:1} and \ref{prop:4}. The first example clarifies notations used in the remainder of this section. In the second example all highly symmetric line systems related to $\textrm{W}(\textrm{H}_4)$ are classified. The third example shows how highly symmetric line systems can define other highly symmetric systems of subspaces. In remaining examples we construct few nice spherical codes/systems of lines from smaller (generalized) highly symmetric frames. The notation $\Phi_1\cup\Phi_2$ where $\Phi_1$ and $\Phi_2$ are frames in the same Hilbert space will be reserved to any frame $\Phi$ containing vectors of both frames $\Phi_1$ and $\Phi_2$ in any order.

\subsection{Notations} Let $G\cong A_5$ and $\rho:G\rightarrow\OO(\mathbb{R}^3)$ one of its irreducible 3-dimensional representations. There is a single conjugacy class of subgroups isomorphic to a cyclic group with 5 elements. By taking one such subgroup denoted by $H$, compute twisted spherical functions associated with its linear characters. Group $H$ has 5 different linear characters, one trivial and 4 of order 5 coming in conjugated pairs. There is a spherical function associated with the trivial character and two twisted spherical functions associated with one of the pairs of conjugated characters denoted by $\nu_1$ and $\nu_2$ respectively. Group $G$ has 4 $(H,H)$-double cosets of sizes 5, 25, 25 and 5. By choosing representatives of each double coset it is possible to compute value of each (twisted) spherical function on each representative of double coset:
\begin{align*}
 &[1,5][\frac{1}{\sqrt{5}},25][-\frac{1}{\sqrt{5}},25][-1,5],\\
 &[1,5][\frac{5+\sqrt{5}}{10}e^{\frac{3\pi i}{5}},25][-\frac{5-\sqrt{5}}{10},25][0,5],\\
 &[1,5][\frac{5+\sqrt{5}}{10}e^{\frac{-3\pi i}{5}},25][-\frac{5-\sqrt{5}}{10},25][0,5].
\end{align*}
Here, the first line is related to a  spherical function associated with the trivial linear character. The line stabilizer of a line system defined by this function is of cardinality $5+5=10$. The corresponding highly symmetric line system has $|A_5|/10=6$ lines and is $\{1/\sqrt{5}\}$-angular, each line representing an antipodal pair of vertices of the regular icosahedron. Other (twisted) spherical functions are associated with the same $\{\frac{5+\sqrt{5}}{10},\frac{5-\sqrt{5}}{10},0\}$-angular highly symmetric line system $\mathcal{L}$ of cardinality 10. Let $\Phi_1$ be a (generalized) highly symmetric frame associated with $\nu_1$ and $\Phi_2$ (generalized) highly symmetric frame associated with $\nu_2$. The first row of matrix $M_{\Phi_1\Phi_2}$ of \eqref{eq:I} is described (modulo phase $c_{1,2}$) by following values on representatives of $(H,H)$-double cosets
\begin{displaymath}
c_{1,2}[0,1][\frac{5+\sqrt{5}}{10}e^{\frac{9\pi i}{5}},5][-\frac{5+\sqrt{5}}{10},5][1,1].
\end{displaymath}
Notice that some inner products between vectors of $\Phi_1$ and $\Phi_2$ have amplitude 1 and these frames define the same line system $\mathcal{L}$. Turns out that pairs $(H,\nu_1)$ and $(H,\nu_2)$ stabilize different lines of $\mathcal{L}$. In fact, conjugacy class of subgroups containing $H$ has only 5 members, each member stabilizing exactly $2$ lines of $\mathcal{L}$ through different linear characters of order 5. 

\subsection{Classification of highly symmetric frames}
Highly symmetric frames defined by 34 exceptional irreducible complex reflection groups in the Shephard–Todd classification were classified in \cite {BW,HW}. It is possible to extend this classification to generalized highly symmetric frames by using Proposition \ref{prop:1}. Table below lists all generalized highly symmetric frames defined by the real reflection group $\textrm{W}(\textrm{H}_4)=\textrm{ST(30)}$ (the automorphism group of the 600-cell). Frames marked by $*$ are not highly symmetric and are therefore missing from the classification in \cite{HW}. New frames are complex despite $\textrm{W}(\textrm{H}_4)$ being real.
\begin{table}[htbp]%
	\centering
	\caption{Parameters of all highly symmetric line systems related to $\textrm{W}(\textrm{H}_4)$. Constant $k$ denotes the order of the linear character $\nu$ in Proposition \ref{prop:1}. Corresponding generalized highly symmetric frame has $k|\mathcal{L}|$ vectors.}
	\begin{tabular}{r|r|r}
		\toprule
		$|\mathcal{L}|$ & k & $\mathcal{A}(\mathcal{L})$ \\
		\midrule
		60     & 2    &$\{\frac{\sqrt{5}+1}{4},\frac{\sqrt{5}-1}{4},\frac{1}{2},0\}$\\ 
		144*  & 10  &$\{\sqrt{ \frac{5+\sqrt{5}}{10}}, \frac{5+\sqrt{5}}{10},\sqrt{\frac{5-\sqrt{5}}{10}},\frac{1}{\sqrt{5}}, \frac{5-\sqrt{5}}{10},0\}$\\
		300   & 2    &$15$-angular\\
		360   & 2    &$18$-angular\\
		400*  & 6    &$14$-angular\\
		480*  & 30  &$21$-angular\\
		600   & 2    &$32$-angular\\
		720*  &20   &$30$-angular\\
		900*  &4    &$32$-angular\\
		1200*&12  &$50$-angular\\		
		\bottomrule
	\end{tabular}
\end{table}

Highly symmetric line systems related to other exceptional irreducible complex reflection groups can also be obtained. For example, the smallest highly symmetric line system associated with ST(34) and unrelated to highly symmetric frames contains 17010 lines represented by complex regular 12-gons, it is $23$-angular with coherence $\mu=\frac{1}{4}\sqrt{7+4\sqrt{3}}$. Similarly, the smallest highly symmetric line system associated with $W(E_8)$ and unrelated to  highly symmetric frames is $\{\frac{2}{3},\frac{1}{\sqrt{3}},\frac{1}{3},0\}$-angular and contains 2240 lines  representad by complex regular hexagons. 

\subsection{Highly symmetric systems of subspaces from highly symmetric line systems} Take $G\cong\textrm{PSp}(4,5)$, a finite simple group of order 4680000 and let $\rho:G\rightarrow\OO(\mathbb{R}^{13})$ be one of its irreducible representations. This representation miss harmonic invariants of degrees lower than 6 meaning that any $\rho(G)$-orbit of a real vector is always a real spherical 5-design and the corresponding line system is always real projective 2-design (see \cite{DGS1,Hog,RS} for more information on spherical and projective designs). 

The smallest highly symmetric line system $\mathcal{L}$ associated with $\rho$ has 156 lines and is $\{1/5,1/\sqrt{5}\}$-angular, it is known from \cite{MW} as a member of more general infinite family of real line systems associated with finite symplectic groups. Unfortunately, this is the only member of this family that is real biangular projective 2-design. Stabilizer subgroup $H$ of a line $l\in\mathcal{L}$ is isomorphic to $5^3:(2\times A_5).2$ (Atlas notation) -- a maximal subgroup of $G$. Subgroup $H$ has a single linear character of order 2 defining a twisted spherical function with following values on each representative of $(H,H)$-double coset of $G$
\begin{displaymath}
[1,30000][\frac{1}{\sqrt{5}},900000][\frac{1}{5},3750000].
\end{displaymath}
Turns out that the Gram matrix associated with the twisted spherical function above also describes a highly symmetric system of 156 3-dimensional subspaces. To see this, take another maximal subgroup of $G$, isomorphic to $H_1\cong 5^{1+2}:4A_5$ in Atlas notation. Let $\nu$ be the character of the irreducible $3$-dimensional subrepresentation occuring in $\rho\downarrow_{H_1}^G$ with multiplicity 1. One of the highly symmetric frames associated with $\nu$  is the regular icosahedron. By "spinning" this icosahedron as in \ref{prop:1} we obtain a $\rho(G)$-frame describing 156 "spinned" 3-dimensional $H_1$-invariant subspaces. Coincidently, vectors of this frame define $\mathcal{L}$  meaning that it can be obtained in multiple ways.

Several other highly symmetric line systems associated with $\rho$ are also related to subspace packings. For example, the second smallest highly symmetric line system has 1560 lines and is related to the same highly symmetric system of 156 3-dimensional subspaces each subspace represented by a regular dodecahedron this time. Another highly symmetric line system of cardinality 9750 is related to a highly symmetric system of 325 4-dimensional subspaces, each subspace being represented by a 600-cell. The group $G$ is not unique in this regard, it is common that highly symmetric line systems define highly symmetric systems of subspaces.

\subsection{Kissing numbers in $\mathbb{R}^{10}$ and $\mathbb{R}^{11}$} Take $G\cong\textrm{PSU(4,2)}$ and let $\rho:G\rightarrow\U(\mathbb{C}^5)$ be an irreducible representation. By taking a maximal subgroup $H_1$ isomorphic to $3^3:S4$ (in Atlas notation) and its unique linear character of order two compute a twisted spherical function. Its values on some set of representatives of $(H_1,H_1)$-double cosets are
\begin{displaymath}
[1,648][\frac{1}{3},17496][\frac{i}{\sqrt{3}},7776],
\end{displaymath}
producing a highly symmetric frame $\Phi_1$ of cardinality 80 after homogenization. Inner products between distinct vectors of $\Phi_1$ belong to $\{\pm1/3,\pm i/\sqrt{3},-1\}$. 

Another twisted spherical function can be computed by taking a maximal subgroup $H_2$ isomorphic to $2.(A_4:A_4).2$ and one of its two linear characters of order 6. Its values on some set of representatives of $(H_1,H_1)$-double cosets are
\begin{displaymath}
[1,576][\frac{1}{2},18432][0,6912],
\end{displaymath} 
producing a highly symmetric frame $\Phi_2$ of cardinality 270. Inner products between distinct vectors of $\Phi_2$ belong to $\{\pm1/2,\pm e^{2\pi i/3}/2,\pm e^{4\pi i/3}/2,\pm e^{2\pi i/3},\pm e^{4\pi i/3},-1\}$.

Proposition \ref{prop:4} allows computation of matrix $M_{\Phi_1\Phi_2}$ modulo phase $c$. The first row of this matrix has the following values on some set of representatives of $(H_1,H_2)$-double cosets:
\begin{displaymath}
c[0,10368][\frac{1}{\sqrt{3}},15552].
\end{displaymath}
Thus, the inner products between distinct vectors of $\Phi_1$ and $\Phi_2$ belong to the set 
\begin{displaymath}
\{0,\pm c/\sqrt{3},\pm ce^{2\pi i/3}/\sqrt{3},\pm ce^{4\pi i/3}/\sqrt{3}\}.
\end{displaymath}

By considering  multiplication only by the real numbers we obtain a real vector space $\mathbb{R}^{10}$ out of the complex vector space $\mathbb{C}^5$. Moreover, the real part of the complex inner product of $\mathbb{C}^5$ defines an inner product in $\mathbb{R}^{10}$ allowing reinterpretation of 5-dimensional complex unit norm frames  as real 10-dimensional spherical codes. 

Real parts of inner products between distinct vectors of $\Phi_1\cup e^{2\pi i/3}\Phi_1\cup e^{4\pi i/3}\Phi_1$ belong to $\{0,\pm1/6,\pm1/3,\pm1/2,-1\}$. Similarly, real parts of inner products between distinct vectors of $\Phi_2$ always belong to $\{0,\pm1/4,\pm1/2,-1\}$. By multiplying $\Phi_2$ by a phase $ic$ observe, that inner products between distinct vectors of $ic\Phi_2$ and $\Phi_1\cup e^{2\pi i/3}\Phi_1\cup e^{4\pi i/3}\Phi_1$ belong to $\{0,\pm i/\sqrt{3},\pm ie^{4\pi i/3}/\sqrt{3},\pm ie^{4\pi i/3}/\sqrt{3}\}$. Therefore, $ic\Phi_2\cup\Phi_1\cup e^{2\pi i/3}\Phi_1\cup e^{4\pi i/3}\Phi_1$ defines a real 10-dimensional $\{0,\pm1/6,\pm1/4,\pm1/3,\pm1/2,-1\}$-angular spherical code of cardinality 510 improving the lower bound on the kissing number in $d=10$ from 500 \cite{NS} to 510. 

Interestingly, the complex 5-dimensional line system defined by the vectors of $\Phi_1\cup\Phi_2$ has $85$ lines and is $\{0,1/3,1/2,1\sqrt{3}\}$-angular. This line system appeared recently in \cite{BGM,MW}.

With pieces fitting so well, we may attempt to also improve the lower bound on the kissing number in $\mathbb{R}^{11}$ which is $582$ according to \cite{NS}. To achieve this, define an embedding $\Pi:\mathbb{R}^{10}\rightarrow\mathbb{R}^{11}$ by $\Pi(v)=(v,0)\textrm{ for all }v\in\mathbb{R}^{10}$ and let $e_{11}=(0,0,0,0,0,0,0,0,0,0,1)$. Now, the following 11-dimensional spherical code
\begin{displaymath}
\Pi(ic\Phi_2)\cup\Pi(\Phi_1)\cup \Pi(e^{2\pi i/3}\Phi_1)\cup(\frac{\sqrt{3}}{2}\Pi(e^{4\pi i/3}\Phi_1)\pm e_{11}/2)\cup\{\pm e_{11}\}
\end{displaymath}
contains $270+80+80+160+2=592$ vectors. Inner products between distinct vectors in this code are at most $1/2$, improving the lower bound on the kissing number in $\mathbb{R}^{11}$ to $592$. 

\subsection{Kissing number in $\mathbb{R}^{14}$} Take $G\cong U(3,3)$ and let $\rho:G\rightarrow\OO(\mathbb{R}^7)$ be an irreducible representation. By computing twisted spherical functions, among others, we find three interesting ones. These can be described by the following rows
\begin{displaymath}
[1,216][\frac{1}{3},5832]
\end{displaymath}
related to a maximal subgroup isomorphic to $3^{1+2}:8$ and its linear character of order 2,
\begin{displaymath}
[1,96][\frac{1}{2},16][\frac{1}{2},16][0,24][0,6]
\end{displaymath}
related to a maximal subgroup isomorphic to $4^2:S_3$ and its linear character of order 2, and
\begin{align*}
[1,16]&[0,16][-\frac{1}{2}+\frac{i}{2},64][-\frac{1}{4},256][-\frac{1}{2}+\frac{i}{4},256][-\frac{1}{4}-\frac{i}{4},256][-\frac{1}{2},256][-\frac{1}{4}+\frac{i}{2},256]\\
&[-\frac{i}{4},256][-\frac{1}{4},256][-\frac{i}{4},256][\frac{1}{4}-\frac{i}{4},256][0,256][-\frac{1}{4},256][\frac{i}{4},256][0,16][0,16][0,64]\\
&[-\frac{1}{2}+\frac{i}{4},256][-\frac{i}{4},256][\frac{1}{4}-\frac{i}{2},256][\frac{i}{4},256][-\frac{1}{4}+\frac{i}{4},256][\frac{i}{2},256][\frac{1}{4}-\frac{i}{4},256]\\
&[\frac{i}{2},64][0,64][\frac{1}{2},256][-\frac{1}{2}+\frac{i}{2},64][0,64][0,64][0,64][0,256][0,16][0,16][0,64]
\end{align*}
related to a certain subgroup of $G$ of order 16 which will be characterized later and one of its linear characters of order 4. Corresponding generalized highly symmetric frames $\Phi_1$, $\Phi_2$ and $\Phi_3$ are defined uniquely modulo phases containing 56, 126 and 1512 vectors respectively. Frames $\Phi_2$ and $\Phi_1$ form inner shells of $E_7$ and $E_7^*$ lattices respectively. Real parts of inner products between distinct vectors of $\Phi_3$ belong to $\{0,\pm1/4,\pm1/2,-1\}$. In order to improve the lower bound on the kissing number in $\mathbb{R}^{14}$ which is 1606 \cite{NS}, we attempt to add additional vectors to $\Phi_3$. The matrix $M_{\Phi_2\Phi_3}$ (modulo phase $c_{2,3}$) is described by the following row
\begin{displaymath}
c_{2,3}[0,768][-\frac{1}{2},768][0,192] [-\frac{1}{4}-\frac{i}{4},1536][-\frac{1}{4}-\frac{i}{4},1536][\frac{1}{2},768][\frac{1}{2}-\frac{i}{2},192][0,192][0,96]
\end{displaymath}
and the matrix $M_{\Phi_1\Phi_3}$ (modulo phase $c_{1,3}$) is described by the following row
\begin{displaymath}
c_{1,3}[0,864][0,864][\frac{i}{\sqrt{6}},3456][\frac{1+i}{\sqrt{6}},864].
\end{displaymath}
From these matrices we see that we may add to $\Phi_3$ either two copies of $\Phi_2$ or two copies of $\Phi_1$ multiplied by properly chosen phases (similarly as we did in the previous example) obtaining 14-dimensional kissing configurations of $1512+2\cdot126=1764$ and $1512+2\cdot56=1624$ vectors respectively. Both codes can be further improved. We will improve the first code and to do so we need to better understand the frame $\Phi_3$ and its connection to the E7 root system represented by the vectors of $\Phi_2$. The vectors of $\Phi_2$ can be grouped into vertices of 9 distinct cross polytopes, one of which, $\mathcal{B}_0$, can be fixed to be  represented by all permutations of 
\begin{displaymath}
(\pm1,0,0,0,0,0,0).
\end{displaymath}
The vertices of the remaining 8 polytopes $\mathcal{B}_1,\ldots,\mathcal{B}_8$ representing remaining vectors of $\Phi_2$ can be represented by cyclic shifts of the vectors
\begin{displaymath}
(0,0,\pm1/2,0,\pm1/2,\pm1/2,\pm1/2),
\end{displaymath}
where 7 distinct patterns on zeros form a 2-$(7,3,1)$-design. Now, modulo a phase, $\Phi_3$ is formed by 1512 unit vectors of the form
\begin{displaymath}
\frac{i^k}{\sqrt{2}}(v+iw)\textrm{ where } 0\leq k\leq3\textrm{, }v\neq w\textrm{ and }v,w\in\mathcal{B}_j\textrm{ for }0\leq j\leq8. 
\end{displaymath}
The line stabilizer of the vector $\frac{1}{\sqrt{2}}(v+iw)$ is generated by a subgroup of $G$ order 8 that stabilizes all lines on the plane spanned by the vectors $v$ and $w$ and by an additional symmetry that maps $v$ to $-w$.

Let $\Phi_4$ be the scaled copy of the $D_7$ root system represented by all permutations of
\begin{displaymath}
(\pm\frac{1}{\sqrt{2}},\pm\frac{1}{\sqrt{2}},0,0,0,0,0).
\end{displaymath}
It is easy to check that the real 14-dimensional spherical code 
\begin{displaymath}
\Phi_3\cup\frac{1+i}{\sqrt{2}}\Phi_2\cup \frac{1-i}{\sqrt{2}}\Phi_2\cup\Phi_4\cup i\Phi_4
\end{displaymath}
is $\{0,\pm1/4,\pm1/2,-1\}$-angular and contains $1512+126+126+84+84=1932$ vectors.

The following table summarizes lower and upper bounds on the kissing number in some low-dimensional spaces.
\begin{table}[htbp]%
	\centering
	\caption{Bounds on kissing number in few low-dimensional spaces.}
	\begin{tabular}{r|r|r|r}
		\toprule
		$d$ & Lower bound \cite{NS} & Improved lower bound & Upper bound \cite{MOF}\\
		\midrule
		8     & 240    &-&240\\ 
		9  & 306  &-&363\\
		10   & 500    &510&553\\
		11  & 582    &592&869\\
		12  & 840    &-&1356\\
		13  & 1154  &-&2066\\
		14   & 1606    &1932&3177\\
		15  &2564   &-&4858\\
		16  &4320    &-&7332\\	
		\bottomrule
	\end{tabular}
\end{table}

\subsection{78 lines in $d=11$} This example is related to Example 4.3 of \cite{IJM} where an $\{1/3,0\}$-angular line system $\mathcal{L}_1$ of cardinality 66  is constructed in $\mathbb{R}^{11}$ from reducible representation of the Mathieu group $M_{11}$. The same line system can be constructed as a highly symmetric line system from 11-dimensional (permutation) representation of the Mathieu group $M_{12}$ by using as line stabilizer a subgroup isomorphic to $H_1\cong M_{10}.2$ (in Atlas notation) permuting transitively 12 points. Another highly symmetric line system $\mathcal{L}_2$ associated with 11 dimensional representation of $M_{12}$ is spanned by 12 vertices of the regular 11-simplex. Within $M_{12}$ each line of the 11-simplex is stabilized by a subgroup isomorphic to $H_2\cong M_{11}$ fixing 1 point in the permutation representation of $M_{12}$.

There is only one $(H_1,H_2)$-double coset inside $M_{12}$: the whole $M_{12}$. Thus, there is only one angle between lines of $\mathcal{L}_1$ and lines of $\mathcal{L}_2$. Since highly symmetric line systems are represented by tight frames, this angle has to be $1/\sqrt{11}$. Thus, $\mathcal{L}_1\cup\mathcal{L}_2$ is an $\{1/3,1/\sqrt{11},1/11,0\}$-angular line system of cardinality 78. According to Levenshtein's second bound \cite{L}, the coherence of any real 11-dimensional line system of cardinality $78$ is at least $\sqrt{7/67}\approx0.323$ meaning that coherence of $\mathcal{L}_1\cup\mathcal{L}_2$ is very close to the lower bound.

\section{Acknowledgements}
I would like to thank Professor Patric Osterg\aa rd for valuable comments.

\end{document}